\theoremstyle{plain}
\newtheorem{tw}{Theorem}[section]
\newtheorem{pr}[tw]{Proposition}
\newtheorem{lm}[tw]{Lemma}
\theoremstyle{definition}
\theoremstyle{remark}
\newtheorem{ex}[tw]{Example}
\newtheorem{con}[tw]{Conjecture}
\DeclareMathOperator{\Irr}{Irr}
\DeclareMathOperator{\Sqf}{Sqf}
\DeclareMathOperator{\Gpr}{Gpr}
\DeclareMathOperator{\GCD}{GCD}
\DeclareMathOperator{\LCM}{LCM}
\newcommand{\vvv}{{\it v}}
\newcommand*\circled[1]{\tikz[baseline=(char.base)]{
            \node[shape=circle,draw,inner sep=1.5pt] (char) {{\rm #1}};}}
\DeclareMathOperator{\rpr}{\rm rpr}
\author{\L ukasz Matysiak\\
	Kazimierz Wielki University\\
	Bydgoszcz, Poland \\
	lukmat@ukw.edu.pl}
\title{On square-free and radical factorizations and existence of some divisors}
\begin{document}

\maketitle

\begin{abstract}
	We discuss various square-free and radical factorizations and existence of some divisors in monoids in the context of: atomicity,
	ascending chain condition for principal ideals, a pre-Schreier property, a greatest common
	divisor property and a greatest common divisor for sets property. 
\end{abstract}

\begin{table}[b]\footnotesize\hrule\vspace{1mm}
	Keywords: monoid, factorization, square-free element, radical generator, atom, Jacobian conjecture.\\
	2010 Mathematics Subject Classification:
	Primary 13A05, Secondary 06F05.
\end{table}

\section{Introduction}

Let $\mathbb{N}=\{1, 2, \dots \}$ and $\mathbb{N}_0=\{0, 1, 2, \dots \}$.

\medskip

Throughout this paper by a monoid we mean a commutative cancellative monoid.

\medskip

Let $H$ be a monoid. We denote by $H^{\ast}$ the group of all invertible elements of $H$. 

\medskip

If $a$, $b\in H$ are relatively primes in $H$, i.e. do not have a common invertible divisor of $H$, then we write $a\rpr b$. Therefore, if $M$ be a submonoid of $H$ and elements $a$, $b\in M$ are relatively primes in $M$, then we write $a\rpr_M b$.

\medskip

If $a$, $b\in H$ satysfying the condition $a=ub$, where $u\in H^{\ast}$, then we write $a\sim b$. 

\medskip

The set of all irreducible elements (atoms) of $H$ will be denoted by $\Irr H$. Recall that an element $a\in H$ is called square-free if it cannot be presented in the form $a=b^2c$, where $b$, $c\in H$ and $b\notin H^{\ast}$. The set of all square-free elements of $H$ we will denote by $\Sqf H$.

\medskip

In \cite {3} Theorem 5.1, with co-authors P. Jędrzejewicz, M. Marciniak and J. Zieliński, we presented a full characterization of submonoids $M$ of the factorial monoid $H$ satisfying the condition 
\begin{itemize}
	\item[(1) ] $\Sqf M\subset\Sqf H$
\end{itemize}
assuming $M^{\ast}=H^{\ast}$.

\medskip

The equivalence of (1) and 
\begin{itemize}
	\item[(2) ] for every $a\in H$, $b\in\Sqf H$, if $a^2b\in M$, then $a$, $b\in M$
\end{itemize}
in \cite{JZanalogs} has been extended to the equivalence of 8 conditions.
Two of these conditions represent a closure with respect to the 1s and 3s factorization (See section \ref{R3.1}), while the closure with respect to 3s was obtained at an earlier stage of the research and published in \cite{factprop}.

\medskip

In addition, we received a full description of such submonoids (of factorial monoid) satisfying the condition (1). They are (with an accuracy to the invertible elements) free submonoids generated
by any set of pairs of relatively prime non-invertible square-free elements.

\medskip

It also turned out that the condition
\begin{itemize}
	\item[(3) ] $\Irr M\subset\Sqf H$
\end{itemize}
is equivalent to the conjunction of (1) and the sentence:
\begin{itemize}
	\item[(4) ] for every $a$, $b\in M$, if $a\rpr_M b$, then $a\rpr_H b$.
\end{itemize}

We have a transparent answer to the question of when the condition (1) be equivalent to the condition (3).

\medskip

A very important step in the conducted research was finding a factorial condition implicating the condition (3): 
\begin{itemize} 
	\item[(5) ] for every $a\in H$, $b\in\Sqf H$, if
	$a^2b\in M$, then $a$, $ab\in M$.
\end{itemize} 

\medskip 

A natural question arose, is it a necessary condition.
The answer is negative -- a counterexample was found (\cite{3}, Example 4.2). The factorial condition to (3) is interesting, five equivalent conditions were obtained (\cite{3}, Theorem 4.3),  including closure with respect to the factorization of 2s (See section \ref{R3.1}).

\medskip

Conditions (1) and (3) are related to the assumption found in the famous Jacobian conjecture.

\begin{con}
	Let $k$ be a field of characteristic $0$. For every polynomials $f_1$, $f_2$, $\dots$, $f_n\in k[x_1, \dots, x_n]$ with $n>1$, if 
	$$jac(f_1, f_2, \dots, f_n)\in k\setminus\{0\},$$ then 
	$$k[f_1, \dots, f_n]=k[x_1, \dots, x_n].$$
\end{con}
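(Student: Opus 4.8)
The statement is the Jacobian Conjecture, open since Keller (1939); I therefore offer a strategy and mark exactly where it must fail, rather than a proof. The plan is to reduce to a normal form, recast both hypothesis and conclusion in the monoid language of the preceding sections, and then ask whether the factorization conditions (1)--(5) can bridge the two.

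First I would apply the Bass--Connell--Wright and Yagzhev reductions: after adjoining variables and rescaling, it suffices to treat maps $F=\mathrm{id}+H$ whose components of $H$ are homogeneous of degree $3$, and the hypothesis $jac(f_1,\dots,f_n)\in k\setminus\{0\}$ becomes nilpotency of the Jacobian matrix of $H$. Over characteristic $0$ this is equivalent to $F$ being \'etale, so a formal inverse $G$ with $G\circ F=\mathrm{id}$ exists automatically; the whole content of the conjecture is that $G$ is polynomial, equivalently that $k[f_1,\dots,f_n]=k[x_1,\dots,x_n]$.

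Next I would set up the monoid framework. Let $H$ be the factorial monoid of nonzero elements of $k[x_1,\dots,x_n]$ taken modulo $H^{\ast}=k\setminus\{0\}$, and let $M$ be the submonoid arising from the nonzero elements of the subring $k[f_1,\dots,f_n]$, so that $M^{\ast}=H^{\ast}$ as required by the setup recalled in the introduction. In this language the conclusion is the very strong statement $\Irr H\subset M$: since $H$ is factorial, $M$ contains every atom of $H$ if and only if the subring equals the whole polynomial ring. One would then hope that the Jacobian hypothesis forces closure properties such as (3), $\Irr M\subset\Sqf H$, together with the relative-primeness transfer (4), $a\rpr_M b\Rightarrow a\rpr_H b$, and attempt to leverage these toward $\Irr H\subset M$.

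The main obstacle is that this last leverage does not exist, and I expect any honest attempt to stall here. The characterization recalled in the introduction shows that the submonoids satisfying (1) are, up to units, \emph{free} on relatively prime square-free pairs --- hence typically \emph{proper} submonoids of $H$ --- so the factorization conditions describe when $M$ is a nicely structured small piece of $H$, not when $M$ exhausts $H$. They constrain the atoms of $M$ but say nothing forcing $\Irr H\subset M$. Moreover Example 4.2 of \cite{3} already shows that the strong factorial condition (5) is not even necessary for (3), confirming that these conditions are strictly weaker than what the conclusion demands. Thus the classical stumbling block survives intact: passing from \emph{\'etale} (a local, infinitesimal condition, equivalently nilpotency of the Jacobian matrix of $H$) to a \emph{globally polynomial inverse} --- where the degree of $G$ is uncontrolled and the Wright tree expansion grows without bound --- is precisely the step that the monoid machinery reorganizes but cannot close.
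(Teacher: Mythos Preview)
Your assessment is correct and, in fact, there is nothing to compare against: the statement is labeled in the paper as a \emph{Conjecture} (environment \texttt{con}), namely the Jacobian Conjecture, and the paper makes no attempt to prove it. It is stated purely as motivation, to explain why the conditions (1)--(7) on square-free elements are of interest. Your recognition that this is an open problem, and your explicit identification of where any monoid-theoretic approach would stall, is the right response.

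One minor point worth sharpening in your write-up: you correctly observe that the factorization conditions (1), (3), (5) characterize when $M$ sits nicely inside $H$, not when $M$ exhausts $H$. But you could make the gap even more explicit by noting that the paper's own equivalence --- conditions (6) and (7) being equivalent to the generalized Jacobian hypothesis --- goes in the direction \emph{hypothesis $\Leftrightarrow$ square-free containment}, which is already known and is the \emph{assumption} side of the conjecture. The missing ingredient is any mechanism taking square-free containment (or any of the equivalent forms (1)--(5)) to the \emph{conclusion} $k[f_1,\dots,f_n]=k[x_1,\dots,x_n]$, and the paper neither claims nor suggests one. Your invocation of Example~4.2 of \cite{3} is apt but slightly tangential: that example separates (5) from (3), whereas the real obstruction you want to emphasize is that none of (1)--(5) is known to imply the full equality $M=H$.
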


Recall a generalization of the Jacobian conjecture formulated in \cite{JZanalogs}.

\begin{con}
	\label{c}
	Let $k$ be a field characteristic $0$. For every polynomials $f_1$, $f_2$, $\dots$, $f_r\in k[x_1, \dots, x_n]$ with $n>1$ and $r\in\{2, \dots, n\}$, if
	$$\gcd(jac^{f_1, f_2, \dots, f_r}_{x_{j_1}, x_{j_2}, \dots, x_{j_r}}, 1\leqslant j_1<\dots < j_r\leqslant n)\in k\setminus\{0\},$$ 
	then 
	\begin{center} 
	$k[f_1, \dots, f_r]$ is algebraically closed in $k[x_1, \dots, x_n]$.
	\end{center} 
\end{con}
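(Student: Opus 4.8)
Conjecture~\ref{c} is a generalization of the Jacobian conjecture, and its special case $r=n$ is the classical Jacobian conjecture itself; it is therefore not a statement one can \emph{prove} here, and the honest task is to describe the reduction strategy that the monoid-theoretic machinery of the paper is built to support and to locate the genuine obstruction. The plan is to transport everything into the language of multiplicative monoids. Write $H=k[x_1,\dots,x_n]\setminus\{0\}$, a factorial monoid under multiplication with $H^{\ast}=k\setminus\{0\}$, whose atoms are the irreducible polynomials and whose square-free elements $\Sqf H$ are the polynomials with no repeated irreducible factor. Let $M=k[f_1,\dots,f_r]\setminus\{0\}$ be the multiplicative monoid of the subalgebra; because the subalgebra is closed under multiplication, $M$ is a submonoid of $H$ with $M^{\ast}=H^{\ast}$. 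In this dictionary the assertion ``$k[f_1,\dots,f_r]$ is algebraically closed in $k[x_1,\dots,x_n]$'' becomes a closure statement for $M\subset H$, and the aim is to identify it with one of the conditions (1)--(5) of the introduction, most naturally $\Irr M\subset\Sqf H$ (condition (3)), reinforced by the relative-primeness transfer (4).

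The key step would be to show that the Jacobian hypothesis forces this square-free submonoid condition: if the greatest common divisor of all the $r\times r$ minors $jac^{f_1,\dots,f_r}_{x_{j_1},\dots,x_{j_r}}$ is a nonzero constant, then every atom of $M$ is square-free in $H$. Heuristically, a repeated irreducible factor $p^2$ appearing in an element of $M$ should manifest as a common factor $p$ of all the Jacobian minors, via chain-rule and logarithmic-derivative identities for $\partial(\,\cdot\,)/\partial x_j$ restricted to the subalgebra; the gcd condition would then exclude non-square-free atoms. Granting this, one invokes the characterizations already recorded in \cite{3} (Theorems~4.3 and~5.1) to pass from $\Irr M\subset\Sqf H$ to the structural description that $M$ is, up to units, freely generated by pairs of relatively prime square-free elements, and finally reads algebraic closedness of $k[f_1,\dots,f_r]$ off that structure.

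The main obstacle is precisely that first implication, and it is insurmountable in general by current methods. Making rigorous the passage ``gcd of the Jacobian minors is constant $\Rightarrow$ the atoms of $M$ are square-free'' is essentially equivalent to the conjecture, and for $r=n$ it is the open Jacobian conjecture; the monoid framework supplies the clean target but not the analytic control over the partial derivatives that the reduction consumes. Thus the realistic contribution is the reformulation itself -- showing that Conjecture~\ref{c} is equivalent to, or implied by, a statement purely about square-free and radical factorizations in $M\subset H$ -- rather than a resolution of it.
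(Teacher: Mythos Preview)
You are right that the paper offers no proof of this statement: it is recorded as a conjecture, and for $r=n$ it specializes to the classical Jacobian conjecture. Where your analysis goes wrong is in locating the obstruction.

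You identify the ``main obstacle'' as the passage from the gcd hypothesis on the Jacobian minors to the square-free condition $\Irr M\subset\Sqf H$, and you call it ``insurmountable in general by current methods''. But the paper explicitly records (citing \cite{JZanalogs}) that, under the standing assumption that $f_1,\dots,f_r$ are algebraically independent over $k$, the generalized Jacobian condition is \emph{equivalent} to conditions (6) and (7), i.e.\ to $\Irr M\subset\Sqf H$ and $\Sqf M\subset\Sqf H$. So the step you flag as the hard one is already a theorem; your heuristic about repeated factors forcing a common factor in the minors is in fact what gets made rigorous there.

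The genuinely open step is the one you treat as routine: deducing algebraic closedness of $k[f_1,\dots,f_r]$ in $k[x_1,\dots,x_n]$ from the square-free condition. The structural results you invoke from \cite{3} (Theorems~4.3 and~5.1) characterize which submonoids $M$ of a factorial monoid satisfy (1) or (3), and describe them as free submonoids on sets of pairwise relatively prime square-free generators; they do \emph{not} deliver algebraic closedness of the corresponding subring. Your sentence ``finally reads algebraic closedness of $k[f_1,\dots,f_r]$ off that structure'' is exactly the content of Conjecture~\ref{c}, not a consequence of the cited theorems. In short, you have the two halves of the reduction swapped: the analytic half is done, and it is the monoid-to-ring-theoretic half that remains open.
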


Under the assumption that $f_1$, $f_2$, $\dots$, $f_r$ are algebraically independent over $k$, the generalized Jacobian condition (assumption of Conjecture \ref{c}) is equivalent to any of the following ones (\cite{JZanalogs}):

\medskip

\begin{itemize} 
	\item[(6) ] every irreducible of $k[f_1, \dots, f_r]$ is square-free in $k[x_1, \dots, x_n]$,
	
	\item[(7) ] every square-free of $k[f_1, \dots, f_r]$ is square-free in $k[x_1, \dots, x_n]$.
\end{itemize}

Conditions (1) and (3) are a generalization of conditions (6) and (7) and therefore we call them the analogs of the Jacobian conditions.

\medskip

A side effect of the presented approach was a natural question about general relationships between square-free factorizations in different classes of monoids.
Of course, these factorizations for rings of polynomials are commonly known, and it is clear that their existence and uniqueness occur in domains with uniqueness of distribution, so e.g. certain properties hold in $\GCD$-domains.
However, these relationships have not been studied so far.
In this paper we will consider pre-Schreier monoids, $\GCD$-monoids, GCDs-monoids, ACCP-monoids, atomic monoids.

\medskip

Recall that a monoid is called $\GCD$-monoid, if for any two elements there is a greatest common divisor.
A monoid $H$ is called GCDs-monoid, if there is greatest common divisor for any subset of $H$.
A monoid $H$ is called a pre-Schreier monoid, if for any $a\in H$ the condtion is met, that for any $b$, $c\in H$ such that $a\mid bc$ there are $a_1$, $a_2\in H$ such that $a=a_1a_2$, $a_1\mid b$ and $a_2\mid c$.
A monoid $H$ is called atomic, if every non-invertible element $a\in H$ be a finite product of irreducibles (atoms).
A monoid $H$ is factorial, if for any non-invertible $a\in H$ an element $a$ we can presented in the form product of irreducibles and  $a=q_1q_2\dots q_k=r_1r_2\dots r_l$, where $q_1, q_2, \dots, q_k, r_1, r_2, \dots, r_l\in\Irr H$ implies $k=l$ and there is a permutation $\sigma$ such that $q_1\sim r_{\sigma(1)}$, $q_2\sim r_{\sigma(2)}$, $\dots$, $q_k\sim r_{\sigma(k)}$.
A monoid $H$ is called ACCP-monoid any ascending sequence principal ideals of $H$ stabilizes.

\medskip

In section \ref{R3.4} we examine the dependencies between square-free factorizations, conditions of existence of certain square-free divisors, and between square-free factorizations and conditions of existence of certain square-free divisors.
The conditions for the existence of certain square-free divisors result from the appropriate factorization, and the condition for the existence of a square-free divisor in a square plays an important role in reasoning about the inclusions (1) and (3).

\medskip

In this context, the concept of a radical generator is very important introduced by A.~Reinhart in 2012 in \cite{Reinhart}.
The element of monoid is called radical if the principal ideal is generated by this element be a radical ideal.
The set of all radical generators of a monoid $H$ will be denoted by $\Gpr H$.
Reinhart's explores the properties of radically factorial monoids, i.e. such that each element is a product of radical generators.
He does not consider various types of radical factorization, nor relationships with square-free factorization.
Let us add that the property of the radical generator (although the author does not use this name) appeared in the work of G. Angerm\"uller published in 2017 in the Grauert-Remmert normality criterion (\cite{Angermuller}, Proposition 31).

\medskip

The radical generator is square-free, so radical factorizations are square-free factorizations.
Therefore, in the section \ref {R3.4} we also study general relationships between radical factorizations, conditions of existence of certain radical divisors, as well as between factorizations and conditions of existence of some divisors (square-free or radical).

\medskip

In \cite{3} the relationship between the four square-free factorizations and two conditions for the existence of square-free divisors was investigated. In this paper, I present the latest results, which include the dependencies binding the next three conditions for the existence of square-free divisors and a total of nine factorization and conditions for the existence of radical divisors.

\medskip

Let's define another class of monoid. A monoid $H$ is called SR-monoid, if $\Gpr H=\Sqf H$.

It turns out that considering the obtained dependencies, we can consider various ways of classifying monoids due to square-free and radical factorization and due to the conditions of existence of certain square-free or radical divisors. The potential number of cases is: 7, 11, 26, 57, 324, 2708, 2960. These numbers depend on the given monoid property (GCDs, $\GCD$, pre-Schreier, SR, atomicity, ACCP, general, respectively).
The results of these studies are described in section \ref{R4.1}.

\section{Auxiliary statements}

In this section we present the Lemmas that we will need later in the next paper.

\begin{lm}
	\label{l2.1.5}
	Let $H$ be a monoid.
	
	\noindent
	$(a)$ 
	Let $a\in\Sqf H$ and $b\in H$.
 	If $b\mid a$ then $b\in\Sqf H$.
	
	\noindent
	$(b)$ Let $a\in\Gpr H$ and $b\in H$.
	If $b\mid a$ then $b\in\Gpr H$.
\end{lm}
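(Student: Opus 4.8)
The plan is to treat the two parts separately, both being instances of the assertion that the relevant set is closed under passing to divisors.

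For $(a)$ I would argue contrapositively by a single substitution. Assume $b \mid a$, say $a = be$ with $e \in H$, and suppose $b \notin \Sqf H$. By definition this means $b = c^2 d$ for some $c, d \in H$ with $c \notin H^\ast$. Substituting gives $a = be = c^2(de)$ with $de \in H$ and $c \notin H^\ast$, which exhibits $a$ as non-square-free and contradicts $a \in \Sqf H$. Hence $b \in \Sqf H$. This part is essentially a one-line computation and presents no real difficulty.

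For $(b)$ I would first unwind the definition of a radical generator into a purely divisibility-theoretic statement: $a \in \Gpr H$ exactly when the principal ideal generated by $a$ is radical, i.e. for every $x \in H$ and every $n \geq 1$, $a \mid x^n$ implies $a \mid x$. Writing $a = bc$ with $c \in H$, the goal becomes showing the same implication for $b$: assuming $b \mid x^n$, I must deduce $b \mid x$.

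The key step, and the one that requires the right idea rather than mere bookkeeping, is to manufacture out of the hypothesis $b \mid x^n$ a power that is divisible by $a$. Multiplying the divisibility $b \mid x^n$ by $c^n$ gives $bc^n \mid (xc)^n$; since $bc^n = a\,c^{n-1}$ is divisible by $a$, transitivity yields $a \mid (xc)^n$. Applying the radical property of $a$ then gives $a \mid xc$, that is $bc \mid xc$, and cancelling $c$ (this is where cancellativity of $H$ is used essentially) produces $b \mid x$, as required. I expect the main obstacle to be precisely finding this multiplier $c^n$ and recognizing that the radical hypothesis on $a$ can be transferred to its divisor $b$ only after homogenizing by the cofactor $c$; once that observation is made, the argument closes at once.
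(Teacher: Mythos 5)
Your proposal is correct. Part $(a)$ is in substance the same contrapositive one-liner as the paper's: a square factor $c^2$ of $b$ with $c\notin H^\ast$ propagates to $a$ via $b\mid a$. For part $(b)$ the paper gives no argument at all, merely citing \cite{3}, Lemma 6.2; your self-contained proof is valid and is the standard one. The unwinding of ``$(a)$ is a radical ideal'' into ``$a\mid x^n\Rightarrow a\mid x$'' agrees with how the paper uses $\Gpr H$ elsewhere (e.g.\ in Lemma \ref{l2.1.9}), and the chain $b\mid x^n\Rightarrow a=bc\mid bc^n\mid (xc)^n\Rightarrow a\mid xc\Rightarrow b\mid x$ (cancelling $c$, which is where cancellativity enters) is exactly the right homogenization trick; no gap.
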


\begin{proof}
	(a) 
	Suppose $b\notin\Sqf H$. Then there exists $d\in H\setminus H^{\ast}$ such that $d^2\mid b$. Hence $d^2\mid a$. A contradiction.
	
	\medskip
	
	(b)
	\cite{3}, Lemma 6.2. 
\end{proof}

\begin{lm}
	\label{l2.1.6}
	Let $H$ be a monoid. If $a\in\Sqf H$ and $a=b_1b_2\ldots b_n$, then $b_i\rpr b_j$ for $i, j\in\{1, \ldots , n\}$, $i\neq j$.
\end{lm}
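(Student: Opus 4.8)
The plan is to argue by contradiction, using the definition of square-freeness directly. Suppose the conclusion fails: then for some pair of indices $i\neq j$ the factors $b_i$ and $b_j$ are not relatively prime, i.e.\ they admit a common non-invertible divisor $d\in H\setminus H^{\ast}$. Writing $b_i=dc_i$ and $b_j=dc_j$ for suitable $c_i,c_j\in H$ records this divisibility in a usable algebraic form.

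The decisive step is to feed these expressions back into the given factorization and gather the two copies of $d$. Because $H$ is commutative, I can regroup $a=b_1b_2\ldots b_n$ as
\[ a=\Bigl(\prod_{k\neq i,\,j}b_k\Bigr)b_ib_j=\Bigl(\prod_{k\neq i,\,j}b_k\Bigr)(dc_i)(dc_j)=d^2c, \]
with $c=c_ic_j\prod_{k\neq i,\,j}b_k\in H$. Thus $a=d^2c$ with $d\notin H^{\ast}$, which is exactly the presentation excluded by the definition of a square-free element. This contradicts $a\in\Sqf H$, so every pair of distinct factors must satisfy $b_i\rpr b_j$.

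I expect no serious obstacle, since the whole argument reduces to a single extraction of $d^2$. The only points demanding attention are the correct reading of $\rpr$ (two elements being relatively prime means they share no non-invertible common divisor) and the observation that only commutativity of $H$ is used to rearrange the product and pull out $d^2$ --- cancellativity plays no role in this direction, and the case $n=1$ is vacuous. One could instead route the argument through Lemma \ref{l2.1.5}$(a)$ after noting $b_i\mid a$, but the direct computation above is the most economical.
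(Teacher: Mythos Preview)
Your argument is correct: the failure of $b_i\rpr b_j$ yields a common non-invertible divisor $d$, and substituting $b_i=dc_i$, $b_j=dc_j$ into the product exhibits $a=d^2c$, contradicting $a\in\Sqf H$. The paper itself does not prove this lemma but simply cites \cite{3}, Lemma~3.1; your direct extraction of $d^2$ is exactly the standard proof one would expect behind that citation, so there is nothing to compare.
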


\begin{proof}
\cite{3}, Lemma 3.1.
\end{proof}

\begin{lm}
	\label{l2.2.5}
	Let $H$ be a pre-Schreier monoid.
	
	\medskip
	\noindent
	(a) 
	Let $a,b,c,d\in H$.
	If $ab=cd$, $a\:\rpr\,c$ and $b\:\rpr\,d$, then $a\sim d$
	and $b\sim c$.
	
	\medskip
	\noindent
	(b) 
	Let $a_1, a_2, \dots, a_n, b\in H$.
	If $a_i\:\rpr\,b$ for $i=1, 2, \dots, n$, then $a_1a_2\ldots a_n\:\rpr\,b$.
	
	\medskip
	\noindent
	(c)
	Let $a,b\in H$. If $a\rpr b$, then $a^k\rpr b^l$ for any $k,l\in\mathbb{N}$.
	
	\medskip
	\noindent
	(d) 
	Let $a_1, a_2, \dots, a_n\in H$.
	If $a_1, a_2, \dots, a_n\in\Sqf H$ and $a_i\:\rpr\,a_j$ for $i, j\in\{1, 2, \ldots, n\}$, $i\neq j$,
	then $a_1a_2\ldots a_n\in\Sqf H$.
	
	\medskip
	\noindent
	(e) 
	Let $a_1, a_2, \dots, a_n\in\Sqf H$, $b\in H$.
	If $a_i\rpr a_j$ for $i, j\in\{1, 2, \ldots, n\}$, $i\neq j$ and $a_i\mid b$ for $i=1, 2, \dots, n$,
	then $a_1a_2\ldots a_n\mid b$.
\end{lm}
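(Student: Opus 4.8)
The plan is to argue by induction on $n$, with the essential content concentrated in the two-element case, which I would isolate as the following claim: in a pre-Schreier monoid, if $x\rpr y$ and $x\mid b$, $y\mid b$, then $xy\mid b$.

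To prove this claim I would start from $x\mid b$ and write $b=xc$ for some $c\in H$. Since $y\mid b=xc$ and $H$ is pre-Schreier, the defining property (applied with the distinguished element $y$ and the product $xc$) yields a factorization $y=y_1y_2$ with $y_1\mid x$ and $y_2\mid c$. Now $y_1\mid x$ and $y_1\mid y$, so $y_1$ is a common divisor of $x$ and $y$; because $x\rpr y$, every common divisor of $x$ and $y$ is invertible, whence $y_1\in H^{\ast}$. Consequently $y\sim y_2$, and from $y_2\mid c$ we obtain $y\mid c$. Writing $c=yd$ then gives $b=xc=xyd$, i.e. $xy\mid b$, which proves the claim. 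I expect this to be the main obstacle: one must invoke the pre-Schreier splitting in the correct direction and then use coprimality to discard the spurious factor $y_1$, after which everything is bookkeeping.

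For the inductive step, assume the statement for $n-1$ pairwise coprime divisors and set $A=a_1a_2\cdots a_{n-1}$. The induction hypothesis, applied to $a_1,\dots,a_{n-1}$ (which are pairwise coprime and each divide $b$), gives $A\mid b$. Next, since $a_i\rpr a_n$ for every $i<n$, part (b) of this lemma, applied to the list $a_1,\dots,a_{n-1}$ against the single element $a_n$, yields $A\rpr a_n$. As also $a_n\mid b$, the two-element claim applied with $x=A$ and $y=a_n$ produces $Aa_n=a_1a_2\cdots a_n\mid b$, which completes the induction and hence the proof.

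Finally, I would remark that the square-free hypothesis on the $a_i$ is not actually used in this argument: the conclusion relies only on the pairwise coprimality, the divisibility conditions, and the pre-Schreier property (through part (b) and the two-element claim). Should one wish, part (d) shows in addition that $A\in\Sqf H$, but this is not needed for the combination step.
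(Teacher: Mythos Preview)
Your argument for part (e) is correct: the two-element claim is exactly the right core (pre-Schreier splitting of $y\mid xc$, then killing the factor $y_1$ via $x\rpr y$), and the induction together with part~(b) handles the general case cleanly. The paper does not give an in-text proof of (e) but simply refers to \cite{3}, Lemma~6.3~(d), so there is no detailed argument to compare against; your write-up supplies one. Your closing observation is also apt: the hypothesis $a_i\in\Sqf H$ plays no role in the divisibility conclusion, only coprimality and pre-Schreier are used.
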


\begin{proof}
	(a), (d) 
	The proof is similar to \cite{2}, Lemma 2 (b), (e).
	
	\medskip
		
	\noindent
	(b), (e) \cite{3}, Lemma 6.3 (b), (d).
	
	\medskip
	
	\noindent
	(c)
	Let $a,b\in H$. Assume $a\rpr b$. Then by (b) we get $a^k\rpr b$ for any $k\in\mathbb{N}$. And again by (b) we have $a^k\rpr b^l$ for any $l\in\mathbb{N}$.
\end{proof}

In the following Proposition we have a very important property in a pre-Schreier monoid.

\begin{pr}
	\label{p2.2.6}
	Let $H$ be a pre-Schreier monoid. Then
	$$\Gpr H=\Sqf H.$$
\end{pr}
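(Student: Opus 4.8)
The plan is to prove the two inclusions separately, since one of them is essentially free. The inclusion $\Gpr H\subseteq\Sqf H$ holds in every monoid, as already noted just before the statement: if a radical generator $a$ failed to be square-free, we could write $a=b^2c$ with $b\notin H^{\ast}$, and then $(bc)^2=b^2c^2=(b^2c)c=ac$ shows $a\mid(bc)^2$. Radicality of $aH$ forces $a\mid bc$, and cancelling $bc$ from $bc=a\cdot t=b^2ct$ yields $bt=1$, i.e. $b\in H^{\ast}$, a contradiction. So the real content, and the only place the hypothesis is needed, is the reverse inclusion $\Sqf H\subseteq\Gpr H$.

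For this, I would fix $a\in\Sqf H$ and verify that $a$ generates a radical ideal, i.e. that $a\mid x^n$ implies $a\mid x$ for every $x\in H$ and every $n\in\mathbb{N}$. The first move is to use the pre-Schreier property to ``peel off'' factors of $x$ one at a time. From $a\mid x\cdot x^{n-1}$ the definition of pre-Schreier gives a factorization $a=c_1d_1$ with $c_1\mid x$ and $d_1\mid x^{n-1}$; applying the same to $d_1\mid x\cdot x^{n-2}$, and iterating, produces after $n$ steps a factorization $a=c_1c_2\ldots c_n$ with $c_i\mid x$ for each $i$. The clean way to make this rigorous is induction on $n$: the base case $n=1$ is trivial, and in the inductive step one splits off $c_1\mid x$ by pre-Schreier and applies the induction hypothesis to $d_1\mid x^{n-1}$.

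Once the factorization $a=c_1\ldots c_n$ with each $c_i\mid x$ is in hand, the square-free hypothesis does the rest. By Lemma~\ref{l2.1.6} the factors of the square-free element $a$ are pairwise relatively prime, so $c_i\rpr c_j$ for $i\neq j$; and by Lemma~\ref{l2.1.5}$(a)$ each $c_i$, being a divisor of $a\in\Sqf H$, again lies in $\Sqf H$. Thus $c_1,\ldots,c_n\in\Sqf H$ are pairwise relatively prime divisors of $x$, and Lemma~\ref{l2.2.5}$(e)$ immediately gives $a=c_1c_2\ldots c_n\mid x$, which is exactly the radicality condition.

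The main obstacle is the first step: reorganising the single divisibility $a\mid x^n$ into a genuine \emph{product} of divisors of $x$. Everything afterwards is a direct assembly of the auxiliary lemmas, but it is worth stressing that those lemmas (and the peeling itself) rely on $H$ being pre-Schreier. In particular, Lemma~\ref{l2.2.5}$(e)$ is a pre-Schreier statement, so the hypothesis is genuinely used twice — first to extract the factorization, and then to recombine the pairwise-coprime square-free pieces into a single divisor of $x$.
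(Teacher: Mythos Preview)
Your proof is correct. The paper itself does not give an argument here but simply cites \cite{3}, Proposition~6.4; your proof is the natural self-contained one built from the auxiliary Lemmas~\ref{l2.1.5}, \ref{l2.1.6}, and~\ref{l2.2.5}, and is almost certainly what the cited reference does as well (note that Lemma~\ref{l2.2.5}(e) is Lemma~6.3(d) of the same paper, immediately preceding the cited Proposition~6.4).
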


\begin{proof}
	\cite{3}, Proposition 6.4.
\end{proof}

\begin{lm}
	\label{l2.1.8}
	Let $H$ be a GCDs-monoid and $a\in H$. Let $X\subset H$ be any non-empty subset of set of divisors of $a$. Then there is $\GCD(X)$.
\end{lm}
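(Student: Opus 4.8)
The plan is to obtain the conclusion as a direct specialization of the defining property of a GCDs-monoid. By definition, in a GCDs-monoid \emph{every} subset of $H$ admits a greatest common divisor, so the whole task reduces to checking that the hypothesis genuinely places $X$ inside $H$. Since the elements of $X$ are divisors of $a$, they are in particular elements of $H$; hence $X\subseteq H$ is a bona fide non-empty subset and the defining property applies to it without modification.

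Concretely, I would argue as follows. Because $H$ is a GCDs-monoid and $X\subseteq H$ is non-empty, the element $d:=\GCD(X)$ exists, which is already the assertion. I would then record the one feature of $d$ that makes the statement useful in later arguments, namely that $d$ is again a divisor of $a$: as $d$ is a common divisor of $X$, we have $d\mid x$ for every $x\in X$; choosing any $x\in X$ (possible since $X\neq\emptyset$) and combining $d\mid x$ with $x\mid a$ via transitivity of divisibility gives $d\mid a$. Thus $\GCD(X)$ not only exists but lands back in the poset of divisors of $a$, which is the form in which the lemma is invoked.

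The honest assessment is that, under the full GCDs-hypothesis, there is no real obstacle: the content is the observation that a set of divisors of a fixed element is, before anything else, a subset of $H$. What I would flag as the conceptual heart of the statement is instead \emph{why} the hypothesis cannot be relaxed to a mere $\GCD$-monoid, even though $X$ is bounded above by $a$. The tempting strategy—forming the family of common divisors of $X$, which is directed upward and consists entirely of divisors of $a$, and then taking its supremum inside the divisor poset of $a$—breaks down because that poset need not be a complete lattice. For instance, in the additive $\GCD$-monoid $(\mathbb{Q}_{\geqslant 0},+)$, where the greatest common divisor is the minimum, take $a=2$ and $X=\{\,q\in\mathbb{Q}_{\geqslant 0}:\sqrt{2}<q\leqslant 2\,\}$: every element of $X$ divides $a$, yet the common divisors of $X$ are precisely the non-negative rationals below $\sqrt{2}$, a set with no largest element, so $\GCD(X)$ fails to exist. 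Hence the passage from pairwise to arbitrary greatest common divisors is genuinely needed, and it is exactly the GCDs-hypothesis that supplies it—notwithstanding that the formal proof is then a one-line appeal to the definition.
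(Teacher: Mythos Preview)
Your argument is formally correct for the statement as printed: by definition a GCDs-monoid has a greatest common divisor for every subset, so $\GCD(X)$ exists trivially, and your additional observation that $\GCD(X)\mid a$ is also correct. However, the paper's own proof establishes something quite different---and non-trivial---namely that $\LCM(X)$ exists. The paper forms the set $Y=\{d\in H:\exists\,c\in X,\ a=cd\}$ of complementary divisors, takes $e=\GCD(Y)$ (this is where the GCDs-hypothesis is actually used), writes $a=ef$, and then verifies that $f$ is the least common multiple of $X$. Although one sentence in that proof reads ``We will show $f=\GCD(X)$'', the very next line begins ``First we prove that $f$ is least common multiple of elements of the set $X$'', and the subsequent application in Proposition~\ref{p3.4.5} explicitly invokes the lemma to conclude that $\LCM(X)$ exists. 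The companion Lemma~\ref{l2.1.9} likewise makes sense only under the reading that $\LCM(X)$ is assumed to exist. So the printed statement almost certainly contains a typo: the intended conclusion is the existence of $\LCM(X)$.

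Consequently, while nothing in your reasoning is wrong, you have proved a tautology rather than the result the paper actually needs. The substantive content---which your discussion of the $\GCD$-versus-GCDs distinction gestures toward---is the duality trick: boundedness of $X$ by the fixed element $a$ lets one convert the $\LCM$ problem for $X$ into a $\GCD$ problem for the complementary set $Y$, and it is the latter that the GCDs-hypothesis handles directly. Your counterexample in $(\mathbb{Q}_{\geqslant 0},+)$ is apt, but it is really illustrating the failure of bounded $\LCM$s, not of bounded $\GCD$s.
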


\begin{proof}
	Let $Y=\{d\in H\mid \exists c\in X: a=cd\}$. Denote by $e$ a greatest common divisor of the set $Y$. Then $e$ divides every element of the set $Y$, so by definition of $Y$ we get $e\mid a$. We have $a=ef$, where $f\in H$.  
	We will show $f=\GCD(X)$. 
	
	\medskip
	
	First we prove that $f$ is least common multiple of elements of the set $X$.
	Consider any element $c\in X$. Since $c\mid a$, then $a=cd$, where $d\in H$. We have $d\in Y$, so $d=eg$, where $g\in H$. Thus, since $d=eg$, then $cd=ceg$, and since $ef=a=cd$, then $ef=ceg$. Then $f=cg$, so $c\mid f$.
	
	\medskip
	
	Now, we will show that every least common multiple of elements of $X$ is the multiple of element $f$. Consider any element $c\in X$ such that $a=cd, d\in Y$. We know that $c\mid h$, so $cd\mid hd$, hence $a\mid hd$. Let $Z=\{ bh, b\in Y\}$. Then we have $\GCD(Z)=h\GCD(Y)=he$. Since $a\mid hl$, then $a\mid eh$. We know $a=ef$, hence $ef\mid eh$, so $f\mid h$. 
\end{proof}

\begin{lm}
	\label{l2.1.9}
	Let $H$ be a monoid and $X\subset\Gpr H$. Assume that there is $\GCD(X)$. Then $\LCM(X)\in\Gpr H$.
\end{lm}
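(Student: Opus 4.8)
The plan is to argue directly from the definition of a radical generator. An element $m$ lies in $\Gpr H$ exactly when the principal ideal $mH$ is radical, that is, when for every $x \in H$ and every $n \in \mathbb{N}$ one has $m \mid x^n \Rightarrow m \mid x$. So I would set $m = \LCM(X)$, fix an arbitrary $x \in H$ and $n \in \mathbb{N}$ with $m \mid x^n$, and aim to conclude $m \mid x$.

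The heart of the argument is that every element of $X$ must already divide $x$. Fix $a \in X$. Since $m$ is a common multiple of $X$ we have $a \mid m$, and combined with $m \mid x^n$ this yields $a \mid x^n$. But $a \in \Gpr H$, so $aH$ is radical and hence $a \mid x$. As $a$ was arbitrary, $x$ is a common multiple of all of $X$. Invoking the minimality built into the least common multiple, $m = \LCM(X)$ divides every common multiple of $X$, so in particular $m \mid x$. This is precisely the radicality condition for $mH$, giving $m \in \Gpr H$.

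The step I expect to be the real obstacle is not this divisibility computation --- which is short --- but making sense of the existence of $\LCM(X)$. In a general monoid the least common multiple of an infinite family need not exist (for instance, the set of all primes of $\mathbb{N}$ has greatest common divisor $1$ but no least common multiple), so the existence of $\GCD(X)$ alone does not by itself force $\LCM(X)$ to exist. Consequently I would be careful to pin down exactly how the hypothesis is meant to deliver $m = \LCM(X)$: presumably through a duality between greatest common divisors and least common multiples in the spirit of Lemma \ref{l2.1.8}, where a common bound lets one manufacture one kind of extremal element from the other. Once $m$ is known to exist, the conclusion $m \in \Gpr H$ follows from the two lines above, using only the radicality of each $a \in X$ and the defining property of the least common multiple.
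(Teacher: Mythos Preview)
Your argument is correct and is essentially identical to the paper's: denote $l=\LCM(X)$, assume $l\mid b^n$, use $c\mid l$ and $c\in\Gpr H$ to get $c\mid b$ for every $c\in X$, then invoke the defining property of the least common multiple to conclude $l\mid b$. Your worry about the hypothesis is well-placed --- the paper's proof likewise simply takes $\LCM(X)$ as given and never uses $\GCD(X)$, so the assumption ``there is $\GCD(X)$'' is evidently a slip for $\LCM(X)$ (compare Lemma~\ref{l2.1.8}, whose proof actually constructs an $\LCM$ despite the statement saying $\GCD$).
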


\begin{proof}
	Denote $l=\LCM(X)$. Consider any element $b\in H$ such that $l\mid b^n$ for some $n\in\mathbb{N}$. Since $l$ is the least common multiple of set $X$, then for any $c\in X$ we have $c\mid l$. Then $c\mid b^n$. Because $c\in\Gpr H$, so $c\mid b$. Then $l\mid b$. 
\end{proof}

\section{Types of factorization and square-free or radical extraction}
\label{R3.1}

In this chapter we consider the relationship between square-free and radical factorizations and the conditions for the existence of some square-free or radical divisors in some monoids.

\medskip

The following properties of the monoid $H$ are paired: the square-free version and the radical version, for example in \circled{0s }/\circled{0r } the fragment ,,$s_1, s_2, \dots, s_n$ $\in$ $\Sqf H/\Gpr H$" we read that for property \circled{0s } we have ,,$s_1, s_2, \dots , s_n$ $\in$ $\Sqf H$", and for property \circled{0r } we have "$s_1, s_2, \dots, s_n$ $\in$ $\Gpr H$". 
In some Lemmas we also have a similar formulation in two variants denoted by $\Sqf H/\Gpr H$ and we read in the same way that the Lemma was formulated for square-free elements or for radical generators.

\medskip

Let $H$ be a monoid.
Consider the following conditions:

\medskip

\noindent
\circled{0s } / \circled{0r } 
For any $a\in H$ there are $n\in\mathbb{N}$ and $s_1,s_2,\dots,s_n\in\Sqf H/\Gpr H$ such that $$a=s_1s_2\dots s_n,$$

\medskip

\noindent
\circled{1s } / \circled{1r } \
for any $a\in H$ there are $n\in\mathbb{N}$ and $s_1,s_2,\dots,s_n\in\Sqf H/\Gpr H$ satysfying the condition $s_i\rpr s_j$ for $i, j\in\{1$, $2$, $\dots$ , $n\}$, $i\neq j$ such that $$a=s_1s_2^2s_3^3\ldots s_n^n,$$

\medskip

\noindent
\circled{2s } / \circled{2r } \
for any $a\in H$ there are $n\in\mathbb{N}$ and $s_1,s_2,\dots,s_n\in\Sqf H/\Gpr H$ satysfying the condition $s_i\mid s_{i+1}$ for $i=1,\dots,n-1$ such that $$a=s_1s_2\dots s_n,$$

\medskip

\noindent
\circled{3s } / \circled{3r } \
for any $a\in H$ there are $n\in\mathbb{N}_0$ and $s_0,s_1,\dots,s_n\in\Sqf H/\Gpr H$ such that $$a=s_0s_1^2s_2^{2^2}\ldots s_n^{2^n},$$

\medskip

\noindent
\circled{4s } / \circled{4r } \
for any $a\in H$ there are $b\in H$ and $c\in\Sqf H/\Gpr H$ satysfying the condition $b\rpr c$ such that $$a=bc$$ and there is $d\in\Sqf H/\Gpr H$ such that $d^2\mid b$ and $b\mid d^n$ for some $n\in\mathbb{N}$,

\medskip

\noindent
\circled{4's } / \circled{4'r } \
for any $a\in H$ there are $b\in H$ and $c\in\Sqf H/\Gpr H$ satysfying the condition $b\rpr c$ such that $$a=bc$$ and for any $d\in\Sqf H/\Gpr H$, if $d\mid b$ then $d^2\mid b$,

\medskip

\noindent
\circled{5s } / \circled{5r } \
for any $a\in H$ there are $b\in H$ and $c\in\Sqf H/\Gpr H$ such that $$a=bc$$ and $a\mid c^n$ for some $n\in\mathbb{N}$,

\medskip

\noindent
\circled{5's } / \circled{5'r } \
for any $a\in H$ there are $b\in H$ and $c\in\Sqf H/\Gpr H$ such that $$a=bc$$ and for any $d\in\Sqf H/\Gpr H$, if $d\mid a$ then $d\mid c$,

\medskip

\noindent
\circled{6s } / \circled{6r } \
for any $a\in H$ there are $b\in H$ and $c\in\Sqf H/\Gpr H$ such that $$a=b^2c.$$

\section{Relationships between factorizations}
\label{R3.4}

\begin{pr}
	\label{p3.4.1}
	Let $H$ be a monoid. 
	\begin{itemize}
		\item[(a) ] The following implications holds:
		$$\begin{array}{ccccccc}
		&&\circled{0s}\\
		&\mbox{\begin{psfrags}\rotatebox{-135}{$\Leftarrow$}\end{psfrags}} && \mbox{\begin{psfrags}\rotatebox{-45}{$\Leftarrow$}\end{psfrags}}\\
		\circled{1s}&\Leftarrow &\circled{2s}&\Rightarrow &\circled{3s} \\
		&& \Downarrow && \Downarrow \\
		&& \circled{5s}&&\circled{6s}
		\end{array}$$

		\item[(b) ] The following implications holds:
		$$\begin{array}{ccccccc}
		&&\circled{0r}\\
		&\mbox{\rotatebox{-135}{$\Leftarrow$}} && \mbox{\rotatebox{135}{$\Rightarrow$}} \\
		\circled{1r}&\Leftarrow &\circled{2r}&\Rightarrow &\circled{3r} \\
		&&\Downarrow && \Downarrow \\
		\circled{4r}&\Leftarrow&\circled{5r}&&\circled{6r}&& \\
		\Downarrow && \Downarrow\\
		\circled{4'r} && \circled{5'r}
		\end{array}$$

		\item[(c) ]The following implications holds:
		
		$$\begin{array}{ccccccccc}
		\circled{0r}&\circled{1r}&\circled{2r}&\circled{3r}&\circled{4r}&\circled{5r}&\circled{6r}\\
		\Downarrow &\Downarrow&\Downarrow&\Downarrow& \Downarrow &\Downarrow &\Downarrow \\
		\circled{0s}&\circled{1s}&\circled{2s}&\circled{3s}&\circled{4s}&\circled{5s}&\circled{6s}
		\end{array}$$
	\end{itemize}
\end{pr}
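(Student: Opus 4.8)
The plan is to check each arrow of the three diagrams separately, grouping the arrows by the type of argument they need. Part (c) is the quickest and I would dispose of it first. Since every radical generator is square-free we have $\Gpr H\subseteq\Sqf H$, so a factorization witnessing \circled{0r} (respectively \circled{1r}, \dots, \circled{6r}) becomes, after weakening each clause ``$\in\Gpr H$'' to ``$\in\Sqf H$'' while leaving all divisibility and relative-primeness clauses untouched, a factorization of exactly the shape required by \circled{0s} (respectively \circled{1s}, \dots, \circled{6s}). This gives \circled{$k$r}$\Rightarrow$\circled{$k$s} for every $k$, which is all of (c).

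For the arrows common to (a) and (b) I would argue once for the square-free versions and then observe that the identical computation works for the radical versions, using only Lemma \ref{l2.1.5} (a divisor of an element of $\Sqf H$, resp.\ $\Gpr H$, lies again in that set) and Lemma \ref{l2.1.6} (a factorization of a square-free element has pairwise relatively prime factors; this applies to radical generators too, as $\Gpr H\subseteq\Sqf H$). The arrows \circled{1s}$\Rightarrow$\circled{0s} and \circled{3s}$\Rightarrow$\circled{0s} are immediate once the powers in $s_1s_2^2\cdots s_n^n$ and $s_0s_1^2\cdots s_n^{2^n}$ are written out as products. The core of this group is the chain condition \circled{2s}. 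Given $a=s_1\cdots s_n$ with $s_1\mid s_2\mid\cdots\mid s_n$ in $\Sqf H$, I would set $w_1=s_1$ and $w_i=s_i/s_{i-1}$ for $i\geq 2$ (well defined since $H$ is cancellative), so that $s_i=w_1\cdots w_i$ and in particular $s_n=w_1\cdots w_n$. By Lemma \ref{l2.1.6} the $w_j$ are pairwise relatively prime, and each $w_j\mid s_n\in\Sqf H$, hence $w_j\in\Sqf H$ by Lemma \ref{l2.1.5}. Counting multiplicities gives $a=\prod_{j=1}^n w_j^{\,n-j+1}$, and reindexing $t_k=w_{n-k+1}$ yields $a=t_1t_2^2\cdots t_n^n$ with the $t_k$ pairwise relatively prime and square-free, which is \circled{1s}. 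For \circled{3s} I would expand the exponents $n-j+1$ in binary and, for each bit position $k$, collect $u_k=\prod_{j:\,\beta_{j,k}=1}w_j$; since every such $u_k$ is a sub-product of $s_n=\prod_j w_j$ it divides $s_n$, hence lies in $\Sqf H$, giving $a=u_0u_1^2u_2^{2^2}\cdots$, which is \circled{3s}. The last two shared arrows are direct: taking $c=s_n$ and $b=s_1\cdots s_{n-1}$ proves \circled{2s}$\Rightarrow$\circled{5s}, because each $s_i\mid s_n=c$ forces $a=\prod s_i\mid c^n$; and taking $c=s_0$, $b=s_1s_2^2\cdots s_n^{2^{n-1}}$ proves \circled{3s}$\Rightarrow$\circled{6s} since then $a=b^2c$. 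Replacing $\Sqf H$ by $\Gpr H$ throughout (and invoking Lemma \ref{l2.1.5}(b) for the statement that radical divisors of radical generators are radical) yields verbatim the corresponding arrows in (b).

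Three arrows of (b) have no square-free analogue in (a), namely \circled{4r}$\Rightarrow$\circled{4'r}, \circled{5r}$\Rightarrow$\circled{5'r} and \circled{5r}$\Rightarrow$\circled{4r}; all of them should exploit the defining property of a radical generator, that $d\in\Gpr H$ and $d\mid x^k$ force $d\mid x$. For \circled{5r}$\Rightarrow$\circled{5'r} I would retain the decomposition $a=bc$: if $e\in\Gpr H$ and $e\mid a$, then $e\mid a\mid c^n$, so $e\mid c$ by the radical property, which is exactly \circled{5'r}. For \circled{4r}$\Rightarrow$\circled{4'r} I would retain $a=bc$ together with the witness $d$: if $e\in\Gpr H$ and $e\mid b\mid d^m$, then $e\mid d$, whence $e^2\mid d^2\mid b$, which is \circled{4'r}. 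These two arguments also explain cleanly why \circled{4's} and \circled{5's} are absent from (a): for a merely square-free $e$ the step ``$e\mid c^n\Rightarrow e\mid c$'' is no longer available.

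The main obstacle is \circled{5r}$\Rightarrow$\circled{4r}, since \circled{4r} asks for $a$ to be split as a squareful factor times a relatively prime unitary radical factor, a separation by multiplicity that is delicate in an arbitrary monoid: a product of two relatively prime radical generators need not be a radical generator when $H$ is neither pre-Schreier nor a $\GCD$-monoid, so the naive idea of collecting the radical divisors of $a$ and multiplying them is unavailable. Here I would use \circled{5r} as a global hypothesis rather than in a single instance. Applying \circled{5r} to $a$ gives $a=bc$ with $c\in\Gpr H$ and $a\mid c^n$, so that by the radical property every radical divisor of $a$ divides $c$, i.e.\ $c$ behaves as the radical of $a$; I would then feed the cofactor $b$ back into \circled{5r} and iterate, using Lemma \ref{l2.1.6} to keep each new cofactor relatively prime to the radical pieces already extracted, until the residual factor $b'$ satisfies $d^2\mid b'$ and $b'\mid d^m$ for an appropriate radical $d$ while the accumulated radical factor $c'$ is relatively prime to $b'$. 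Arranging this extraction so that it terminates with the clauses of \circled{4r} all holding simultaneously, and in particular pinning down the correct witness $d$, is the step I expect to require the most care and to carry the bulk of the work.
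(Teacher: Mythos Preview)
Your treatment of part (c), of the shared arrows in (a) and (b), and of \circled{4r}$\Rightarrow$\circled{4'r} and \circled{5r}$\Rightarrow$\circled{5'r} is correct and essentially matches the paper (your justification for \circled{2s}$\Rightarrow$\circled{3s}, via the observation that each binary-collected $u_k$ divides $s_n$ and hence is square-free by Lemma~\ref{l2.1.5}, is in fact cleaner than what the paper writes there).

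There is, however, a genuine gap at \circled{5r}$\Rightarrow$\circled{4r}. Your plan is to iterate \circled{5r} on successive cofactors and accumulate radical pieces until the residual factor has the squareful shape. In an arbitrary monoid there is no ACCP available, so you have no termination argument; you acknowledge this yourself. More importantly, the accumulation idea goes in the wrong direction: the paper's proof needs only \emph{two} applications of \circled{5r}, followed by a rearrangement that moves the second radical piece to the squareful side rather than to the radical side. Concretely: apply \circled{5r} to $a$ to get $a=bc$, $c\in\Gpr H$, $a\mid c^m$; apply \circled{5r} to $b$ to get $b=de$, $e\in\Gpr H$, $b\mid e^k$. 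Since $e\mid b\mid a\mid c^m$ and $e$ is radical, $e\mid c$; write $c=ef$ with $f\in\Gpr H$ (Lemma~\ref{l2.1.5}(b)) and $e\rpr f$ (Lemma~\ref{l2.1.6}). Now take the decomposition $a=(be)\cdot f$: the squareful part is $be=de^2$, witnessed by $d:=e$, since $e^2\mid be$ and $be\mid e^{k+1}$. For the relative-primeness clause $be\rpr f$, observe that any common divisor $g$ of $be$ and $f$ satisfies $g\mid f\in\Gpr H$, hence $g\in\Gpr H$; and $g\mid be\mid e^{k+1}$ then forces $g\mid e$, so $g$ is a common divisor of $e$ and $f$, hence a unit. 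This is the missing idea: the radical property of $f$ (not of $e$) is what lets you pass from $g\mid e^{k+1}$ to $g\mid e$ without any pre-Schreier hypothesis, and it is precisely why no further iteration is needed.
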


\begin{proof}
	\begin{itemize}
		\item[(a) ]
		\circled{2s}$\Rightarrow$\circled{1s}
		\cite{2} Proposition 1, (a), (iv)$\Rightarrow$ (vi). 
			
		\medskip
		
		\circled{2s}$\Rightarrow$\circled{3s}
		From \circled{2s}$\Rightarrow$\circled{1s} we can present an element $a$ as $a=u_1u_2^2u_3^3\dots u_n^n$, where $u_1, u_2, \dots , u_n\in\Sqf H/\Gpr H$ satysfying the condition $u_i\rpr u_j$ for $i,j\in\{1$, $2$, $\dots$ , $n\}$, $i\neq j$, where $s_{n-i+1}=s_{n-i}u_i$ for $i\in\{1, 2, \dots , n-1\}$ and $u_n=s_1$.
		Then
		$$\prod_{k=1}^nu_k^k=
		\prod_{k=1}^nu_k^{\sum_{i=0}^r c_i^{(k)} 2^i}=
		\prod_{k=1}^n\prod_{i=0}^r u_k^{c_i^{(k)} 2^i}=
		\prod_{i=0}^r\big(\prod_{k=1}^n u_k^{c_i^{(k)}}\big)^{2^i}.$$
		Denote $t_i=\prod_{k=1}^{n} u_k^{c_i^{(k)}}$ for $i=0, 1, \dots r$. 
		Because $u_i\rpr u_j$ for $i\neq j$, so from Lemma \ref{l2.1.6} we have $t_i\in\Sqf H$. 
		Therefore $a=t_0t_1^2t_2^{2^2}\dots t_r^{2^r}$, where $k=\sum_{i=0}^r c_i^{(k)}2^i$ for $k=1, 2, \dots , n$ and $c_i^{(k)}\in\{0,1\}$. 
		
		\medskip
		
		\circled{2s}$\Rightarrow$\circled{5s}
		\cite{3}, Proposition 3.4, (ii)$\Rightarrow$(v).
		
		\medskip
		
		\circled{3s}$\Rightarrow$\circled{6s}
		Obvious.		
		
		\item[(b) ] 
		
		\circled{4r}$\Rightarrow$\circled{4'r}
		Let $e\in\Gpr H$ be such that $e\mid b$. By assumption we have $b\mid d^n$, hence $e\mid d^n$, because $e\mid b$. But $e\in\Gpr H$, so from the fact that $e\mid d^n$ we have $e\mid d$, thus $e^2\mid d^2$. By assumption we have $d^2\mid b$, so $e^2\mid b$.
		
		\medskip
		
		\circled{5r}$\Rightarrow$\circled{4r}
		Let $a=bc$, where $b\in H, c\in\Gpr H$ such that $a\mid c^m$ for some $m\in\mathbb{N}$. By assumption we can $b$ presented in the form $b=de$, where $d\in H$, $e\in\Gpr H$ such that $b\mid e^k$ for some $k\in\mathbb{N}$. 
		
		\medskip
		
		Since $e\mid b$, $b\mid a$ and $a\mid c^m$, then $e\mid c^m$. But $e\in\Gpr H$, so $e\mid c$ by definition. Then $c=ef$, where $f\in H$. By Lemma \ref{l2.1.5} we refer that $f\in\Gpr H$, and from Lemma \ref{l2.1.6} we have $e\rpr f$. From equation $b=de$ we have $be=de^2$. We get $a=bef$, where $e^2\mid be$ and $be\mid e^{k+1}$.

		\medskip

		Now we will prove that $be\rpr f$.
		From divisibilities $d\mid be$, $be\mid e^{k+1}$ and $e^{k+1}\mid c^{k+1}$ we have $d\mid c^{k+1}$ and $f\mid c$, $c\mid c^{k+1}$, so $f\mid c^{k+1}$. In other hand we have $df\mid bef$, $bef\mid a$ and $a\mid c^l$ for some $l\in\mathbb{N}$, so $df\mid c^l$. Hence since $d\mid c^k$, $f\mid c^l$, $df\mid c^l$, then $d\rpr f$. And since $e\rpr f$, then $be\rpr f$.
		
		\medskip
		
		\circled{5r}$\Rightarrow$\circled{5'r}
		Let $d\in\Gpr H$ be such that $d\mid a$. Since $d\mid a$ and by assumption $a\mid c^n$, then $d\mid c^n$. Because $d\in\Gpr H$, so $d\mid c$.
		
		\item[(c) ]	
		The proof comes from the fact that any radical generator is a square-free element.
		
	\end{itemize}
\end{proof}

Recall that in a SR-monoid the concept of a square-free element coincides with the concept of a radical generator, therefore it is enough to consider square-free properties.

\begin{pr}
	\label{p3.4.1a}
	Let $H$ be a SR-monoid. Then
	\begin{itemize}
		\item[(a) ] the following implications hold:
		
		$$\begin{array}{ccccccc}
		&&\circled{0s}\\
		&\mbox{\rotatebox{-135}{$\Leftarrow$}} & \Uparrow & \mbox{\rotatebox{135}{$\Rightarrow$}} \\
		\circled{1s}&\Leftarrow &\circled{2s}&\Rightarrow &\circled{3s} \\
		&&\Downarrow && \Downarrow \\
		\circled{4s}&\Leftarrow&\circled{5s}&&\circled{6s}&& \\
		\Downarrow && \Downarrow\\
		\circled{4's} && \circled{5's}
		\end{array}$$

		\item[(b) ] the following equivalences hold:
		
		$$\begin{array}{ccccccccccccccccc}
		\circled{0r}&&\circled{1r}&&\circled{2r}&&\circled{3r}&&\circled{4r}&&\circled{4'r}&&\circled{5r}&&\circled{5'r}&&\circled{6r}\\
		\Updownarrow &&\Updownarrow&&\Updownarrow&&\Updownarrow&&\Updownarrow&&\Updownarrow&& \Updownarrow &&\Updownarrow &&\Updownarrow \\
		\circled{0s}&&\circled{1s}&&\circled{2s}&&\circled{3s}&&\circled{4s}&&\circled{4's}&&\circled{5s}&&\circled{5's}&&\circled{6s}
		\end{array}$$
	\end{itemize}
\end{pr}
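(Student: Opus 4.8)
The plan is to exploit the single defining feature of an SR-monoid, namely $\Gpr H=\Sqf H$, which collapses each square-free condition onto its radical twin. First I would observe that for every admissible index the conditions $\circled{ns}$ and $\circled{nr}$ (and likewise the pairs $\circled{4's}/\circled{4'r}$ and $\circled{5's}/\circled{5'r}$) differ only in that certain distinguished factors are required to lie in $\Sqf H$ rather than in $\Gpr H$. Since $\Sqf H=\Gpr H$ under the SR hypothesis, these are literally the same assertions about $H$. This settles part (b) at once: for each index one has $\circled{ns}\Leftrightarrow\circled{nr}$, with no further argument, simply because the two statements are identical after substituting $\Sqf H=\Gpr H$.

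For part (a) I would then transport the radical diagram of Proposition \ref{p3.4.1}(b) across these equivalences. By part (b) it is legitimate to replace every $r$ by $s$, so each implication established there for the radical conditions holds verbatim for the square-free ones; this yields $\circled{2s}\Rightarrow\circled{1s}$, $\circled{2s}\Rightarrow\circled{3s}$, the diagonal arrows into $\circled{0s}$, $\circled{2s}\Rightarrow\circled{5s}$, $\circled{3s}\Rightarrow\circled{6s}$, $\circled{5s}\Rightarrow\circled{4s}$, $\circled{4s}\Rightarrow\circled{4's}$ and $\circled{5s}\Rightarrow\circled{5's}$, that is, every arrow of the asserted diagram except the central $\Uparrow$. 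The one arrow $\circled{2s}\Rightarrow\circled{0s}$ I would supply directly: a factorization $a=s_1s_2\dots s_n$ with $s_i\mid s_{i+1}$ and all $s_i\in\Sqf H$ is in particular a product of square-free elements, which is exactly $\circled{0s}$ (equivalently, by transitivity through $\circled{2s}\Rightarrow\circled{1s}\Rightarrow\circled{0s}$).

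The point worth emphasizing, and the only place any care is needed, is that the implications genuinely new in part (a) relative to the general square-free diagram of Proposition \ref{p3.4.1}(a) — namely $\circled{5s}\Rightarrow\circled{4s}$, $\circled{4s}\Rightarrow\circled{4's}$ and $\circled{5s}\Rightarrow\circled{5's}$ — are \emph{not} reproved on the square-free side. They hold only because the SR hypothesis identifies $\Sqf H$ with $\Gpr H$, which is precisely what allows the radical arguments of Proposition \ref{p3.4.1}(b) (whose crucial step is the radical-generator property $e\mid d^n\Rightarrow e\mid d$) to be applied to square-free elements. There is therefore no serious obstacle: the entire content is the bookkeeping observation that under $\Gpr H=\Sqf H$ the $s$- and $r$-conditions coincide, so the only real tasks are to match each pair of conditions correctly and to insert the single elementary step for the $\Uparrow$.
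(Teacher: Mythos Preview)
Your proposal is correct and follows essentially the same approach as the paper: the paper's proof of (a) consists of the single sentence ``Since $H$ is a SR-monoid, so every implication from Proposition~\ref{p3.4.1}~(b) holds,'' and its proof of (b) is just ``Obvious.'' You have simply unpacked this reasoning, and your explicit treatment of the extra arrow $\circled{2s}\Rightarrow\circled{0s}$ (which the paper leaves implicit via transitivity) is a harmless addition.
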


\begin{proof}
	\begin{itemize}
		\item[(a) ]
		Since $H$ is a SR-monoid, so every implications from Proposition \ref{p3.4.1} (b) hold.
		
		\item[(b) ]	
		Obvious.
	\end{itemize}
\end{proof}

Since in pre-Schreier monoids, $\GCD$-monoids and GCDs-monoids the SR property holds, therefore in the following three Propositions it is enough to consider square-free dependencies.

\begin{pr}
	\label{p3.4.3}
	Let $H$ be a pre-Schreier monoid. Then
	
	\begin{itemize} 
		
		\item[(a) ] the following implications and equivalences hold:
		
		$$\begin{array}{ccccccc}
		&&\circled{0s}\\
		&\mbox{\rotatebox{-135}{$\Leftarrow$}} &&
		\mbox{\rotatebox{-45}{$\Leftarrow$}} \\
		\circled{1s}&\Leftrightarrow &\circled{2s}&\Rightarrow&\circled{3s} \\
		\Downarrow && \Downarrow && \Downarrow \\
		\circled{4s}&\Leftrightarrow&\circled{5s}&& \circled{6s} \\
		\Downarrow && \Downarrow\\
		\circled{4's} && \circled{5's}
		\end{array}$$
		
		\item[(b) ] if the condition \circled{2s} holds, then $H$ be GCD-monoid.
	\end{itemize}
\end{pr}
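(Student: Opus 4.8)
The plan is to handle (a) and (b) separately, using throughout that a pre-Schreier monoid is an SR-monoid. For (a) I would first invoke Proposition \ref{p2.2.6}, which gives $\Gpr H=\Sqf H$; hence every implication of Proposition \ref{p3.4.1}(b) may be rewritten with $\Gpr H$ replaced by $\Sqf H$. This already supplies all arrows of the diagram except two: it gives $\circled{2s}\Rightarrow\circled{1s}$, $\circled{2s}\Rightarrow\circled{3s}$, $\circled{2s}\Rightarrow\circled{5s}$, $\circled{3s}\Rightarrow\circled{6s}$, $\circled{5s}\Rightarrow\circled{4s}$, $\circled{4s}\Rightarrow\circled{4's}$, $\circled{5s}\Rightarrow\circled{5's}$, together with the obvious implications into $\circled{0s}$. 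So the genuinely new content is the two converses $\circled{1s}\Rightarrow\circled{2s}$ and $\circled{4s}\Rightarrow\circled{5s}$, which upgrade the corresponding single arrows to the equivalences $\circled{1s}\Leftrightarrow\circled{2s}$ and $\circled{4s}\Leftrightarrow\circled{5s}$; the remaining vertical arrow $\circled{1s}\Rightarrow\circled{4s}$ then follows by composing $\circled{1s}\Rightarrow\circled{2s}\Rightarrow\circled{5s}\Rightarrow\circled{4s}$.

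To prove the two new implications: for $\circled{1s}\Rightarrow\circled{2s}$, given $a=s_1s_2^2\cdots s_n^n$ with the $s_i\in\Sqf H$ pairwise relatively prime, I set $c_j=s_js_{j+1}\cdots s_n$. Each $c_j$ is a product of pairwise relatively prime square-free elements, hence $c_j\in\Sqf H$ by Lemma \ref{l2.2.5}(d); moreover $c_{j+1}\mid c_j$ and $\prod_{j=1}^n c_j=\prod_{i=1}^n s_i^{\,i}=a$, so reindexing $r_i=c_{n+1-i}$ exhibits a chain factorization $a=r_1\cdots r_n$ with $r_1\mid\cdots\mid r_n$. For $\circled{4s}\Rightarrow\circled{5s}$, I take the $\circled{4s}$-data $a=bc$, $c\in\Sqf H$, $b\rpr c$, and $d\in\Sqf H$ with $d^2\mid b$, $b\mid d^n$. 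Since $d\mid b$ and $b\rpr c$ one gets $d\rpr c$, so $c'=dc\in\Sqf H$ by Lemma \ref{l2.2.5}(d); then $c'=dc\mid bc=a$ gives $a=b'c'$, and $a=bc\mid d^nc\mid(dc)^n=(c')^n$ gives $a\mid(c')^n$, which is $\circled{5s}$. Both steps rest on Lemma \ref{l2.2.5}(d), i.e. exactly the pre-Schreier hypothesis absent in the general and SR cases.

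For (b) the goal is that every pair $a,b$ has a greatest common divisor. Since $\circled{2s}$ yields $\circled{5's}$ by part (a), every element has a largest square-free divisor (its radical), so by repeatedly passing from $x$ to $x/\mathrm{rad}(x)$ I can normalize the chain factorizations to $a=s_1\cdots s_n$, $b=t_1\cdots t_n$ (padding with invertibles to a common length) with $s_1\mid\cdots\mid s_n$ and $t_1\mid\cdots\mid t_n$ square-free. The plan is to set $d_i=\gcd(s_i,t_i)$ and claim $\gcd(a,b)=d_1\cdots d_n$: the $d_i$ again form a chain ($d_i\mid d_{i+1}$, since a common divisor of $s_i,t_i$ divides $s_{i+1},t_{i+1}$), and from $d_i\mid s_i$, $d_i\mid t_i$ one obtains $d_1\cdots d_n\mid a$ and $d_1\cdots d_n\mid b$ at once. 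Existence of $\gcd(s,t)$ for square-free $s,t$ I would derive from the $\circled{1s}$-form of $st$: writing $st=v_1v_2^2v_3^3\cdots$, a pre-Schreier splitting argument shows $st$ admits no non-invertible cube divisor (if a non-invertible cube divided $st$, Lemma \ref{l2.1.6} and Lemma \ref{l2.2.5}(c) would force a square to divide the square-free $s$ or $t$), so $st=v_1v_2^2$ with $v_1\rpr v_2$, and a further splitting via Lemma \ref{l2.2.5}(e) shows $v_2\mid s$ and $v_2\mid t$, identifying $v_2=\gcd(s,t)$.

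The hard part will be the universality in (b): proving that every common divisor $w$ of $a$ and $b$ divides the assembled $d_1\cdots d_n$, equivalently that the square-free-power factorization is essentially unique, so that $v_2$ above truly absorbs all of the common part and no common factor can hide inside $v_1$. The delicate point is that the two chains for $a$ and $b$ need not be aligned, so $w$ must be split simultaneously against both and propagated along the chains, with repeated use of the pre-Schreier property (Lemma \ref{l2.2.5}(a),(e)); controlling this propagation relies on the finiteness of the chains furnished by $\circled{2s}$, and this is where I expect the argument to require the most care. Once $\gcd(a,b)$ is shown to exist for all pairs, $H$ is by definition a GCD-monoid.
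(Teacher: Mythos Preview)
Your treatment of part (a) is correct and essentially the same as the paper's. The only cosmetic differences are that the paper obtains $\circled{1s}\Rightarrow\circled{2s}$ by citing \cite{2} (your partial-products argument is precisely what that citation unwinds to), and the paper proves $\circled{1s}\Rightarrow\circled{4s}$ directly by taking $c=s_1$, $b=s_2^2\cdots s_n^n$, $d=s_2\cdots s_n$, whereas you get it by composing $\circled{1s}\Rightarrow\circled{2s}\Rightarrow\circled{5s}\Rightarrow\circled{4s}$. Your proof of $\circled{4s}\Rightarrow\circled{5s}$ is identical to the paper's.

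For part (b) there is nothing to compare: the paper simply cites an external reference (\cite{3}, Reviewer's remark) and gives no argument. Your outline goes further. The step establishing that $\gcd(s,t)$ exists for square-free $s,t$ is sound: applying $\circled{1s}$ to $st$ and using $\Sqf H=\Gpr H$ one checks that $v_i\in H^\ast$ for $i\geqslant 3$ (since $v_i^3\mid st$ forces, via a pre-Schreier split $v_i^3=pq$ with $p\mid s$, $q\mid t$ and $p,q\in\Gpr H$, that $p,q\mid v_i$, whence $p^2\mid q$ and so $p,q,v_i\in H^\ast$), leaving $st=v_1v_2^2$; then $s=v_2s'$, $t=v_2t'$ with $s't'\sim v_1$, $s'\rpr t'$, $s'\rpr v_2$, $t'\rpr v_2$, and a routine pre-Schreier chase shows any common divisor of $s,t$ divides $v_2$. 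The passage to arbitrary $a,b$ via $d_i=\gcd(s_i,t_i)$ is the right shape, and the universality you flag as ``the hard part'' can indeed be completed along your lines: given $w\mid a$, $w\mid b$, apply $\circled{2s}$ to $w$ itself, and use the uniqueness of the $\circled{5's}$-radical to align the chains; the pre-Schreier splits then let you push each square-free layer of $w$ into the corresponding $d_i$. So your plan is viable, but this last step does require a careful induction on chain length that you have not written out.
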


\begin{proof}
	\begin{itemize} 
		\item[(a) ]
		\circled{1s}$\Rightarrow$\circled{2s}
		\cite{2}, Proposition 1, (b), (vi)$\Rightarrow$ (iv).
		
		\medskip
		
		\circled{1s}$\Rightarrow$\circled{4s}
		Put $b=s_2^2s_3^3\dots s_n^n$ and $c=s_1$. From the fact that $s_1, s_2, \dots , s_n$ are pairwise relatively prime results $b\rpr c$ from Lemma \ref{l2.2.5} (d). Moreover for $d=s_2s_3\dots s_n$ we have $d^2\mid b, b\mid d^n$. Because $s_i\rpr s_j$ for $i, j\in\{2, 3, \ldots , n\}$, $i\neq j$, so from Lemma \ref{l2.2.5} (e) we have $d\in\Sqf H$.
		
		\medskip
		
		\circled{4s}$\Rightarrow$\circled{5s}
		Assume $a=bc$, where $b\in H$, $c\in\Sqf H$ such that $b\rpr c$ and $b=d^2e$, $b\mid d^m$, where $d\in\Sqf H$ and $m\in\mathbb{N}$. Then $a=d^2ec=(de)(cd)$. Since $d\mid b, b\rpr c$, then $d\rpr c$, so $cd\in\Sqf H$ by Lemma \ref{l2.2.5} (d). We get also that since $b\mid d^m$, then $bc\mid d^mc$, and because $d^mc\mid (cd)^m$, so $a\mid (cd)^m$. 
		
		\medskip
		
		The other implications hold from Proposition \ref{p3.4.1}.
		
		\item[(b) ] 
		\cite{3}, Reviewer's remark, p. 865.
	\end{itemize}
\end{proof}

\begin{pr}
	\label{p3.4.4}
	Let $H$ be a GCD-monoid. Then the following implications and equivalences hold:
	$$\begin{array}{ccccccc}
	&&\circled{0s}\\
	&\mbox{\rotatebox{-135}{$\Leftrightarrow$}} &\Updownarrow&
	\mbox{\rotatebox{-45}{$\Leftrightarrow$}} \\
	\circled{1s}&\Leftrightarrow &\circled{2s}&\Leftrightarrow&\circled{3s} \\
	\Downarrow &&\Downarrow && \Downarrow \\
	\circled{4s}&\Leftrightarrow&\circled{5s}&&\circled{6s} \\
	\Downarrow && \Downarrow\\
	\circled{4's} && \circled{5's}
	\end{array}$$
\end{pr}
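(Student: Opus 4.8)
The plan is to lean on the fact that every $\GCD$-monoid is pre-Schreier, so that \emph{all} of the arrows already proved in Proposition \ref{p3.4.3} are available without further work; what is left is only to upgrade the arrows touching \circled{0s}, \circled{2s} and \circled{3s} into equivalences. Concretely, Proposition \ref{p3.4.3} already supplies \circled{1s}$\Leftrightarrow$\circled{2s}, the implication \circled{2s}$\Rightarrow$\circled{3s}, the ``obvious'' arrows \circled{1s}$\Rightarrow$\circled{0s} and \circled{3s}$\Rightarrow$\circled{0s} (a chain, respectively a $2^i$-power factorization, is in particular a factorization into square-free elements), and the entire lower block \circled{1s}$\Rightarrow$\circled{4s}, \circled{2s}$\Rightarrow$\circled{5s}, \circled{3s}$\Rightarrow$\circled{6s}, \circled{4s}$\Leftrightarrow$\circled{5s}, \circled{4s}$\Rightarrow$\circled{4's}, \circled{5s}$\Rightarrow$\circled{5's}. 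Hence the only genuinely new implication I have to establish is \circled{0s}$\Rightarrow$\circled{2s}. Granting it, the chains \circled{0s}$\Rightarrow$\circled{2s}$\Rightarrow$\circled{3s}$\Rightarrow$\circled{0s} and \circled{0s}$\Rightarrow$\circled{2s}$\Leftrightarrow$\circled{1s}$\Rightarrow$\circled{0s} close the central vertical and both diagonals into the asserted equivalences.

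The engine of the argument is a single lemma: in a $\GCD$-monoid the least common multiple of two square-free elements is again square-free. To prove it I set $g=\gcd(a,b)$ and write $a=ga'$, $b=gb'$ with $a'\rpr b'$; from the standard $\GCD$-monoid identity $ab\sim\gcd(a,b)\cdot\lcm(a,b)$ together with $ab=g^2a'b'$ I obtain, after cancelling $g$, that $\lcm(a,b)\sim g\,a'b'$. Since $a=ga'$ and $b=gb'$ are square-free, Lemma \ref{l2.1.5}(a) makes $g,a',b'$ square-free and Lemma \ref{l2.1.6} makes $g\rpr a'$ and $g\rpr b'$; as also $a'\rpr b'$, the three factors are pairwise relatively prime square-free elements, whence Lemma \ref{l2.2.5}(d) gives $\lcm(a,b)\in\Sqf H$.

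With this lemma I would prove \circled{0s}$\Rightarrow$\circled{2s} by induction on the number $n$ of square-free factors in $a=s_1\cdots s_n$, the inductive heart being a \emph{merge} step: if $c_1\mid c_2\mid\dots\mid c_k$ is an ascending chain of square-free elements and $s\in\Sqf H$, then $s\,c_1c_2\cdots c_k$ is again such a chain. I would establish this by a nested induction on $k$, absorbing $s$ from the top: put $c_k^{*}=\lcm(c_k,s)$ (square-free by the lemma) and $g=\gcd(c_k,s)$ (square-free, with $g\mid s$), so that $c_ks\sim g\,c_k^{*}$ and the task reduces to merging the square-free element $g$ into the shorter chain $c_1\mid\dots\mid c_{k-1}$ and then appending $c_k^{*}$ on top. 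The invariant that closes the recursion is that merging a square-free element $x$ into a chain with largest term $c_{k-1}$ yields a chain whose largest term is $\lcm(c_{k-1},x)$; since $g\mid s\mid c_k^{*}$ and $c_{k-1}\mid c_k\mid c_k^{*}$, this largest term $\lcm(c_{k-1},g)$ divides $c_k^{*}$, so appending $c_k^{*}$ preserves the ascending-chain condition, and a cancellation check confirms that the product is exactly $a$. Feeding $s=s_n$ into the chain obtained for $s_1\cdots s_{n-1}$ completes the outer induction, yielding \circled{2s}.

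The step I expect to be the main obstacle is precisely this merge lemma: maintaining the ascending-chain condition while pushing the ``carry'' $\gcd(c_k,s)$ downward requires the right strengthened induction hypothesis (tracking the top term of the merged chain), and the unit bookkeeping hidden in $c_ks\sim g\,c_k^{*}$ must be arranged so that the final factorization is literally $a$ rather than merely an associate. By contrast, the tempting alternative of defining the layers $r_j$ directly as $\LCM$s of the $j$-fold $\gcd$s of the $s_i$ forces one to prove the product identity $a\sim r_1r_2\cdots r_n$ without recourse to prime valuations, since a $\GCD$-monoid need not be factorial; that seems harder to control than the inductive merge, so I would commit to the merge route.
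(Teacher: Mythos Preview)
Your overall plan is correct: once the pre-Schreier implications of Proposition~\ref{p3.4.3} are inherited, the only new arrow is \circled{0s}$\Rightarrow$\circled{2s}, and the loops you indicate close all the equivalences at the top of the diagram. Your proof of the auxiliary lemma that $\lcm(a,b)\in\Sqf H$ for $a,b\in\Sqf H$ is clean and uses exactly the tools the paper has available (Lemmas~\ref{l2.1.5}, \ref{l2.1.6}, \ref{l2.2.5}(d)); and the inductive merge, with the strengthened hypothesis that the top term of the merged chain is $\lcm(c_{k-1},x)$, does go through---the check $\lcm(c_{k-1},g)\mid c_k^{*}$ is exactly what is needed, and absorbing the stray unit into the bottom term is harmless since associates of square-free elements are square-free and divisibility is unaffected.

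The paper's own proof, by contrast, does not give any argument for \circled{0s}$\Rightarrow$\circled{2s} (or for \circled{1s}$\Leftrightarrow$\circled{2s}$\Leftrightarrow$\circled{3s}): it simply cites \cite{2}, Proposition~1(b) for the three-way equivalence and \cite{3}, Reviewer's remark, p.~865 for \circled{0s}$\Rightarrow$\circled{2s}, with the remaining arrows deferred to Proposition~\ref{p3.4.3}. So your proposal is genuinely more self-contained: you supply an explicit elementary argument where the paper outsources to the literature. What this buys you is independence from those references; what it costs is the somewhat delicate merge induction, whose invariant you correctly identified as the potential obstacle. Either route is valid, and the key step you isolated matches the paper's.
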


\begin{proof}
	\circled{1s}$\Leftrightarrow$\circled{2s}$\Leftrightarrow$\circled{3s}
	\cite{2}, Proposition 1 (b).
	
	\medskip
	
	\circled{0s}$\Rightarrow$\circled{2s}
	\cite{3}, Reviewer's remark, p. 854.
	
	\medskip
	
	The other implications and equivalences hold from Proposition \ref{p3.4.3}.
	
\end{proof}

\begin{pr}
	\label{p3.4.5}
	Let $H$ be a GCDs-monoid. Then the condition $\circled{5's}$ holds.
\end{pr}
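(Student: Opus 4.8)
The plan is to exhibit, for each $a\in H$, an explicit choice of the square-free factor $c$, namely the least common multiple of all square-free divisors of $a$, and to show it has the required universal property. Since $H$ is a GCDs-monoid it is in particular an SR-monoid, so $\Sqf H=\Gpr H$ and it suffices to verify the square-free version $\circled{5's}$; the radical version then follows automatically from the coincidence of the two notions.

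First I would fix $a\in H$ and set $X=\{d\in\Sqf H : d\mid a\}$. This set is non-empty, since it contains every unit and units are square-free (a unit cannot be written as $b^2c$ with $b$ non-invertible, as any divisor of a unit is a unit), and every element of $X$ divides $a$, so $X$ is a non-empty subset of the divisors of $a$. Applying Lemma \ref{l2.1.8}, the greatest common divisor $\GCD(X)$ exists; moreover, the construction in its proof produces, for a set of divisors of $a$, the least common multiple as a divisor of $a$. Thus $c:=\LCM(X)$ exists and $c\mid a$, so I can write $a=bc$ for some $b\in H$.

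The heart of the argument, and the step I expect to be the main obstacle, is to check that $c$ is itself square-free, for otherwise it would not be an admissible factor in $\circled{5's}$. Here I would invoke the SR property once more: since $X\subset\Sqf H=\Gpr H$ and $\GCD(X)$ exists, Lemma \ref{l2.1.9} yields $\LCM(X)\in\Gpr H=\Sqf H$, i.e. $c\in\Sqf H$. This is precisely where routing the argument through radical generators pays off, because the assertion that a least common multiple of square-free elements is again square-free is exactly what Lemma \ref{l2.1.9} delivers in its radical formulation, whereas a direct square-free proof would be less transparent.

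Finally I would verify the universal clause. Let $d\in\Sqf H$ with $d\mid a$; then by definition $d\in X$, and since $c=\LCM(X)$ every element of $X$ divides $c$, whence $d\mid c$. Combined with the factorization $a=bc$ and $c\in\Sqf H$ established above, this is exactly condition $\circled{5's}$, and the proof is complete.
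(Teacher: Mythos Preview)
Your proof is correct and follows essentially the same route as the paper: define $X$ as the set of square-free divisors of $a$, invoke Lemma~\ref{l2.1.8} to obtain $c=\LCM(X)$ as a divisor of $a$, and invoke Lemma~\ref{l2.1.9} to see that $c$ is square-free. Your explicit remark that the SR property (via Proposition~\ref{p2.2.6}) is what allows one to feed $X\subset\Sqf H$ into Lemma~\ref{l2.1.9}, whose hypothesis requires $X\subset\Gpr H$, is a clarification the paper leaves implicit.
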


\begin{proof}
	Let $a\in H$ and $X=\{d\in\Sqf H; d\mid a\}$.
	From Lemma \ref{l2.1.8} there exists $\LCM (X)$. Let $c=\LCM (X)$. By Lemma \ref{l2.1.9} we get that $c\in\Sqf H$. Since every element belonging to $X$ divides $a$, the $c\mid a$. Hence $a=bc$ for some $b\in H$. Consider any $d\in\Sqf H$ such that $d\mid a$. But $d\in X$, hence $d\mid c$, because $c=\LCM (X)$.   
\end{proof}

\medskip

Note that in an atomic monoid the \circled{0s} property holds.

\medskip

\begin{pr}
	\label{p3.4.9}
	Let $H$ be an ACCP-monoid. Then  
	
	\begin{itemize}
		\item[(a) ]
		the conditions $\circled{0s}$, $\circled{3s}$ and $\circled{6s}$ hold,

		\item[(b) ] the following implicatios and equivalences hold:
		
		$$\begin{array}{ccccccccccccccc}
		&&&&&&&&&&\circled{5'r} && \circled{4'r}\\
		&&&&&&&&&&\Uparrow && \Uparrow\\
		\circled{0r}&\Leftarrow &\circled{1r}&\Leftarrow&\circled{2r}&\Leftrightarrow&\circled{3r}&\Leftrightarrow&\circled{6r}&\Leftrightarrow&\circled{5r}&\Rightarrow&\circled{4r} \\
		&&\Downarrow && \Downarrow &&&&&& \Downarrow && \Downarrow \\
		&&\circled{1s}&&\circled{2s}&&&\Rightarrow&&&\circled{5s}&&\circled{4s}\\
		&&&& \Uparrow \\
		&&&& \circled{5's}
		\end{array}$$
		
	\end{itemize}
\end{pr}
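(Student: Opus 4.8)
For part (a) my plan is to extract a distinguished divisor and let the ACCP force termination. Since an ACCP-monoid is atomic, \circled{0s} is immediate (as remarked just before the Proposition). For \circled{6s} I would consider the set of principal ideals $\{(b):b^2\mid a\}$; it is non-empty (take $b=1$) and by the ACCP has a maximal element $(b)$. Write $a=b^2c$. If $c\notin\Sqf H$, say $c=e^2f$ with $e\notin H^{\ast}$, then $(be)^2\mid a$ and $(b)\subsetneq(be)$, contradicting maximality, so $c\in\Sqf H$. For \circled{3s} I would iterate \circled{6s}: $a=b_0^2s_0$, $b_0=b_1^2s_1,\dots$ with $s_i\in\Sqf H$. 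Since $b_{i+1}\mid b_i$ and $(b_i)\subsetneq(b_{i+1})$ whenever $b_i\notin H^{\ast}$, the chain $(b_0)\subseteq(b_1)\subseteq\cdots$ stabilizes, so some $b_N$ is invertible; back-substitution gives $a\sim s_0s_1^2s_2^{2^2}\cdots s_N^{2^N}$, which is \circled{3s}.

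For part (b), most arrows are inherited: \circled{2r}$\Rightarrow$\circled{3r}$\Rightarrow$\circled{6r}, \circled{2r}$\Rightarrow$\circled{1r}, \circled{2r}$\Rightarrow$\circled{5r}, \circled{5r}$\Rightarrow$\circled{4r}, \circled{5r}$\Rightarrow$\circled{5'r}, \circled{4r}$\Rightarrow$\circled{4'r} from Proposition \ref{p3.4.1}(b), the vertical radical-to-square-free implications from \ref{p3.4.1}(c), and \circled{2s}$\Rightarrow$\circled{5s} from \ref{p3.4.1}(a); also \circled{1r}$\Rightarrow$\circled{0r} is trivial (expand the powers into single radical generators). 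The new content is the loop \circled{2r}$\Leftrightarrow$\circled{3r}$\Leftrightarrow$\circled{6r}$\Leftrightarrow$\circled{5r} and the implication \circled{5's}$\Rightarrow$\circled{2s}. Two reverse arrows are clean. The implication \circled{6r}$\Rightarrow$\circled{3r} is the radical mirror of \circled{6s}$\Rightarrow$\circled{3s} above (iterate radical-square extraction, terminate by the ACCP). For \circled{5r}$\Rightarrow$\circled{2r}, given $a=bc$ with $c\in\Gpr H$ and $a\mid c^m$, apply \circled{5r} to the successive cofactors $b=b'c'$, $b'=b''c'',\dots$; since $c'\mid b\mid c^{m-1}$ and $c'\in\Gpr H$, the radical property $c'\mid x^n\Rightarrow c'\mid x$ yields $c'\mid c$, and likewise $c''\mid c'$, and so on. This builds a divisibility chain of radical generators multiplying to $a$ up to a unit, and the ACCP ends the recursion, giving \circled{2r}. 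The implication \circled{5's}$\Rightarrow$\circled{2s} goes identically: \circled{5's} provides the largest square-free divisor $c$ of $a$, and iterating, each successive largest square-free divisor divides the previous one, producing the required chain after ACCP-termination.

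What remains, and is the heart of the matter, is one connecting arrow from \circled{3r} or \circled{6r} into \circled{2r} or \circled{5r}; by the clean arrows above it suffices to prove, say, \circled{6r}$\Rightarrow$\circled{5r}, i.e. that for each $a$ the radical $\sqrt{(a)}$ is generated by a single radical generator $c$ with $c\mid a\mid c^m$. I would argue by well-founded induction on the proper-divisor relation (well-founded precisely because of the ACCP) through the extraction $a=\beta^2\gamma$, $\gamma\in\Gpr H$: given a radical generator $\rho$ with $\rho\mid\beta\mid\rho^k$, one computes $a\mid\rho^{2k}\gamma$, so it is enough to merge the two radical divisors $\rho,\gamma$ of $a$ into a radical divisor containing both, namely $\LCM(\rho,\gamma)$, which is a radical generator by Lemma \ref{l2.1.9} and divides $a$; then $\LCM(\rho,\gamma)\mid a\mid\LCM(\rho,\gamma)^{2k+1}$, which is \circled{5r}.

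The obstacle is exactly this merging step: an ACCP-monoid need not be a $\GCD$-monoid, so $\GCD(\rho,\gamma)$ — and hence $\LCM(\rho,\gamma)$ — is not available for free. The crux is therefore to show that under the global hypothesis \circled{6r}, the ACCP together with the radical property $c\mid x^n\Rightarrow c\mid x$ already supplies the finitely many greatest common divisors that Lemma \ref{l2.1.9} requires, so that the radical of every element is again a principal radical ideal. This is the point at which the radical property must take over the role that the $\GCD$-assumption plays in Proposition \ref{p3.4.4}, and it is where I expect the real work of the proof to lie.
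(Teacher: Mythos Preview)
Your treatment of part (a) and of the implications \circled{5's}$\Rightarrow$\circled{2s}, \circled{5r}$\Rightarrow$\circled{2r}, \circled{6r}$\Rightarrow$\circled{3r} is correct and is exactly what the paper does (the paper just cites an earlier reference for (a), but your direct arguments are the natural ones).

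The genuine gap is precisely where you locate it: the step \circled{6r}$\Rightarrow$\circled{5r}. Your proposed route through $\LCM(\rho,\gamma)$ and Lemma~\ref{l2.1.9} does not go through, and the hope that ``\circled{6r} plus ACCP already supplies the finitely many greatest common divisors'' is not realized --- the paper never establishes anything of the sort, and there is no reason to expect it in a bare ACCP-monoid. So as written, your argument for this implication is incomplete.

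The paper avoids the $\LCM$ issue entirely by a simple change of variable in the iteration. Instead of applying \circled{6r} to the successive square-parts $b_i$, it applies \circled{6r} to the successive \emph{products} $b_ic_i$: start with $a=b_1^2c_1$, then write $b_1c_1=b_2^2c_2$, then $b_2c_2=b_3^2c_3$, and so on, always with $c_i\in\Gpr H$. Since $b_{i+1}c_{i+1}\mid b_ic_i$, the chain $(b_1c_1)\subset(b_2c_2)\subset\cdots$ is ascending and stabilizes by ACCP; from $b_kc_k\sim b_{k+1}c_{k+1}$ and $b_kc_k=b_{k+1}^2c_{k+1}$ one cancels to get $b_{k+1}\in H^{\ast}$. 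Now $c_{k+1}\sim b_kc_k\mid b_{k-1}c_{k-1}\mid\cdots\mid b_1c_1\mid a$, so $c_{k+1}\mid a$; and from $a\mid(b_1c_1)^2$, $b_ic_i\mid(b_{i+1}c_{i+1})^2$ one telescopes to $a\mid(b_kc_k)^{2^k}\sim c_{k+1}^{2^k}$. Thus $a=bc_{k+1}$ with $c_{k+1}\in\Gpr H$ and $a\mid c_{k+1}^{2^k}$, which is \circled{5r}. No $\GCD$ or $\LCM$ is ever needed: iterating on $b_ic_i$ rather than on $b_i$ automatically ``merges'' the old radical part into the new one, doing for free exactly the job you were trying to assign to $\LCM(\rho,\gamma)$.
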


\begin{proof}
	\begin{itemize}
		\item[(a) ]
		
		\cite{2}, Proposition 1 (c), (i), (iii).

\medskip

\item[(b) ]
\circled{5's}$\Rightarrow$\circled{2s}
Consider any element $a\in H$. We can presented element $a$ in the form $a=b_1c_1$, where $b_1\in H$, $c_1\in\Sqf H$ and for every $d\in\Sqf H$, if $d\mid a$, then $d\mid c$.

\medskip

We can presented element $b_1$ in the form $b_1=b_2c_2$, where $b_2\in H$, $c_2\in\Sqf H$ and for every $d\in\Sqf H$, if $d\mid b_1$, then $d\mid c_2$.

\medskip

An element $b_2$ we can presented in the form $b_2=b_3c_3$, where $b_3\in H$, $c_3\in\Sqf H$ and for every $d\in\Sqf H$, if $d\mid b_2$, then $d\mid c_3$.

\medskip

Continuing, we get an ascending sequence of principal ideals
$$(b_1)\subset (b_2)\subset (b_3)\subset\dots .$$ 
Then by ACCP condition there exists $m\in\mathbb{N}$ such that $$(b_n)=(b_{n+1})=(b_{n+2})\dots .$$ In particular $(b_k)=(b_{k+1})$, so $b_k\sim b_{k+1}$. Because $b_k=b_{k+1}c_{k+1}$, hence $c_{k+1}\in H^{\ast}$. we know that for any element $d\in\Sqf H$, if $d\mid b_k$, then $d\mid c_{k+1}$. But $c_{k+1}\in H^{\ast}$, hence since $d\mid b_k$, then $d\in H^{\ast}$.

\medskip

We have 
$$a=b_1c_1=b_2c_2c_1=\dots =b_kc_kc_{k-1}\dots c_1=c_kc_{k-1}\dots c_1,$$ 
because $b_k\in H^{\ast}$. We show that for every $i=2, 3, \dots k$ the divisibiity $c_i\mid c_{i-1}$ holds. For $i=2$ we have $c_2\mid b_1$, because $b_1=b_2c_2$. Since $c_2\mid b_1$, then $c_2\mid a$. Then by the assumption $c_2\mid c_1$. For $i= 3, 4, \dots $ we know that for every element $b_{i-1}$ we can presented in the form $b_{i-1}=b_ic_i$, hence $c_i\mid b_{i-1}$. We also know that $b_{i-1}\mid b_{i-2}$. And hence $c_i\mid b_{i-2}$. By the assumption we have for any element $d\in\Sqf H$, if $d\mid b_{i-2}$, then $d\mid c_{i-1}$, so since $c_i\mid b_{i-2}$, then $c_i\mid c_{i-1}$, because $c_i\in\Sqf H$.

\medskip

\circled{6s}$\Rightarrow$\circled{3s}
Consider any element $a\in H$. The element $a$ can be presented in the form $a=b_1^2c_1$, where $b_1\in H$, $c_1\in\Sqf H/\Gpr H$. 

\medskip

An element $b_1$ can be presented in the form $b_1=b_2^2c_2$, where $b_2\in H$, $c_2\in\Sqf H/\Gpr H$. Similarly, we can presented an element $b_2$ in the form $b_2=b_3^2c_3$, where $b_3\in H$, $c_3\in\Sqf H/\Gpr H$.

\medskip

By continuing this process, we obtain an ascending sequence of principal ideals
$$(b_1)\subset (b_2)\subset (b_3)\subset$$

By ACCP condition there exists $k\in\mathbb{N}$ such that $b_k\sim b_{k+1}$. And because $b_k=b_{k+1}^2c_{k+1}$, hence $b_{k+1}, c_{k+1}\in H^{\ast}$. Since $b_k\sim b_{k+1}$ and $b_{k+1}\in H^{\ast}$, then $b_k\in H^{\ast}$. 

\medskip

Then
\begin{eqnarray*} 
a&=b_1^2c_1=b_2^{2^2}c_2^2c_1=b_3^{2^3}c_3^{2^2}c_2^2c_1=\dots = b_k^{2^k}c_k^{2^{k-1}}c_{k-1}^{2^{k-2}}\dots c_2^2c_1=\\
&=s_0s_1^2s_2^{2^2}\dots s_n^{2^n},
\end{eqnarray*}
where $s_0=c_1$, $s_1=c_2$, $s_2=c_3$, $\dots$, $s_{n-1}=c_k$, $s_n=b_k$.

\medskip
		
\item[(c) ]
\circled{5r}$\Rightarrow$\circled{2r}

Consider any element $a\in H$. We can introduced the element $a$ in the form $a=b_1c_1$, where $b_1\in H$, $c_1\in\Gpr H$ and $a\mid c_1^{n_1}$ holds for some $n_1\in\mathbb{N}$. 

\medskip

An element $b_1$ can be presented in the form $b_1=b_2c_2$, where $b_2\in H$, $c_2\in\Gpr H$ and $b_1\mid c_2^{n_2}$ holds form some $n_2\in\mathbb{N}$. 

\medskip

An element $b_2$ can be presented in the form $b_2=b_3c_3$, where $b_3\in H$, $c_3\in\Gpr H$ and $b_2\mid c_3^{n_3}$ holds for some $n_3\in\mathbb{N}$.

\medskip

Continuing our reasoning we get an increasing sequence of principal ideals
$$(b_1)\subset (b_2)\subset (b_3)\subset .$$

By ACCP condition there exists $n$ such that 
$$(b_n)=(b_{n+1})=(b_{n+2})=\dots.$$
In particular $(b_n)=(b_{n+1})$, so $b_n\sim b_{n+1}$.
And because $b_n=b_{n+1}c_{n+1}$, so $c_{n+1}\in H^{\ast}$. There is also divisibility $b_n\mid c_{n+1}^{m_{n+1}}$, hence $b_n\in H^{\ast}$.

\medskip

Then we get
$$a=b_1c_1=b_2c_2c_1=b_3c_3c_2c_1=\dots =b_nc_nc_{n-1}\dots c_2c_1=s_1s_2\dots s_n,$$
where $s_1=b_nc_n$, $s_2=c_{n-1}$, $s_3=c_{n-2}$, $\dots$, $s_n=c_1$.

\medskip

It remained to prove that for $i=1, 2, \dots, n-1$ the condition $c_{i+1}\mid c_i$ holds.
For $i=1$ we have divisibilities $c_2\mid b_1, b_1$, $b_1\mid a$, $a\mid c_1^{m_1}$, hence $c_2\mid c_1$, because $c_2\in\Gpr H$.
For $i>1$ divisibilities $c_{i+1}\mid b_i$, $b_i\mid b_{i-1}$, $b_{i-1}\mid c_i^{m_i}$ holds, and hence $c_{i+1}\mid c_i$. Since $c_{i+1}\in\Gpr H$, then $c_{i+1}\mid c_i$.

\medskip
		
\circled{6r}$\Rightarrow$\circled{5r}
Consider any element $a\in H$.
An element $a\in H$ can be presented in the form $a=b_1^2c_1$, where $b_1\in H$, $c_1\in\Gpr H$.
		
\medskip
		
An element $b_1c_1$ can be presented in the form $b_1c_1=b_2^2c_2$, where $b_2\in H$, $c_2\in\Gpr H$. Similarly, we can presented an element $b_2c_2$ in the form $b_2c_2=b_3^2c_3$, where $b_3\in H$, $c_3\in\Gpr H$. 
		
\medskip

By repeating the process, we obtain the following ascending sequence of principal ideals 
$$(b_1c_1)\subset (b_2c_2)\subset (b_3c_3) \dots $$ 
By ACCP condition there exists $k\in\mathbb{N}$ such that $$(b_kc_k)=(b_{k+1}c_{k+1})=(b_{k+2}c_{k+2})\dots $$ 
In particular $(b_kc_k)=(b_{k+1}c_{k+1})$, so $b_kc_k\sim b_{k+1}c_{k+1}$. 
From the equation $b_kc_k=b_{k+1}^2c_{k+1}$ and from $b_kc_k\sim b_{k+1}c_{k+1}$ we get $b_{k+1}\in H^{\ast}$. 

\medskip

We have the following divisibility: 
$$c_{k+1}\mid b_kc_k, b_kc_k\mid b_{k-1}c_{k-1}, \dots, b_2c_2\mid b_1c_1, b_1c_1\mid a.$$
Therefore, since $a=b_1^2c_1$, then $a\mid (b_1c_1)^2$. Since $b_1c_1=b_2^2c_2$, then $b_1c_1\mid (b_2c_2)^2$. Generally for $i=2, 3, \dots, k$ we have $b_{k-1}c_{k-1}\mid (b_kc_k)^2$. Hence $a\mid (b_kc_k)^{2^k}$. Since $b_kc_k\sim c_{k+1}$, then $a\mid c_{k+1}^{2^k}$. 
	
\end{itemize}

\medskip

The other implications hold from Proposition \ref{p3.4.1}.
\end{proof}

\section{Unique representation}

In this section, we present the unique presentation of the factorization and the conditions of existence of square-free and radical divisors.

\begin{pr}
	\label{p3.6.1}
	Let $H$ be a monoid.
		

	Consider any elements $a,c\in H$, $b,d\in \Gpr H$, such that for any $e\in\Gpr H$ implications hold:
	if $e\mid ab$, then $e\mid b$ and if $e\mid cd$, then $e\mid d$.
	If
	$$ab\sim cd,$$
	then $a\sim c$ and $b\sim d$.

\end{pr}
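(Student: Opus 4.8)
The plan is to read the two hypotheses as saying that $b$ (respectively $d$) absorbs every radical divisor of the product $ab$ (respectively $cd$), and to exploit the assumption $ab\sim cd$ to transfer radical divisors back and forth between the two products. First I would establish $b\sim d$ by a symmetric divisibility argument, and then obtain $a\sim c$ almost for free by cancelling $b$.

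To get $b\sim d$: since $ab\sim cd$, there is a unit $u\in H^{\ast}$ with $ab=u\,cd$; in particular $cd\mid ab$ and $ab\mid cd$, so the two products have exactly the same divisors up to associates. Now $d\in\Gpr H$ and $d\mid cd\mid ab$, so $d$ is a radical divisor of $ab$; applying the first hypothesis with $e=d$ yields $d\mid b$. Symmetrically, $b\in\Gpr H$ and $b\mid ab\mid cd$, so the second hypothesis with $e=b$ gives $b\mid d$. Mutual divisibility $b\mid d$ and $d\mid b$ in a cancellative commutative monoid forces $b\sim d$.

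Then $a\sim c$ follows immediately: writing $d=vb$ for some unit $v\in H^{\ast}$ (which is what $b\sim d$ provides) and substituting into $ab=u\,cd$, we obtain $ab=(uv)\,c\,b$; cancelling $b$, which is legitimate since $H$ is cancellative, gives $a=(uv)c$, that is, $a\sim c$.

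The only real subtlety is recognising which instance of the quantified hypotheses to invoke, namely choosing $e=d$ in the first implication and $e=b$ in the second, and then using the symmetry between the roles of $ab$ and $cd$. It is worth emphasising that beyond the memberships $b,d\in\Gpr H$ (needed merely so that the hypotheses may be applied to them), the defining property of a radical generator is never used; the whole argument rests on the two absorption hypotheses together with the cancellativity of $H$.
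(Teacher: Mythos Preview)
Your proof is correct and follows essentially the same route as the paper's own argument: apply the second hypothesis with $e=b$ (using $b\mid ab\sim cd$) to get $b\mid d$, apply the first hypothesis with $e=d$ to get $d\mid b$, conclude $b\sim d$, and then cancel to obtain $a\sim c$. The paper's version is simply terser, omitting the explicit unit bookkeeping and the cancellation step that you spell out.
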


\begin{proof}
Assume $ab\sim cd$.
We see that $b\in\Gpr H$ and $b\mid cd$, so $b\mid d$ by assumption.
Similarly, we justify divisibility $d\mid b$. Hence $b\sim d$, and then $a\sim c$.
	
\end{proof}

The uniques of \circled{2r}, \circled{5r} was proved in \cite{3}, Proposition 6.5 (a), (b).



The uniques of \circled{1s} was proved in \cite{3}, Proposition 6.6.

\begin{pr}
	\label{p3.6.3}
	Let $H$ be a GCD-monoid.
	
	\textit{(a) } 
	Consider any elements $a, c\in H$, $b, d\in\Sqf H$, such that $a\rpr b$, $c\rpr d$ and for some elements $e, f\in\Sqf H$ and $m, n\in\mathbb{N}$ divisibilities $e^2\mid a$, $a\mid e^m$ and $f^2\mid c$, $c\mid f^n$ hold. If
	$$ab\sim cd,$$
	then $a\sim c$, $b\sim d$.
	
	\medskip
	
	\textit{(b) } 
	Consider any elements $a, c\in H$, $b, d\in\Sqf H$, such that $a\rpr b$, $c\rpr d$ and for any $g\in\Sqf H$ the implication holds: if $g\mid a$, then $g^2\mid a$. If
	$$ab\sim cd,$$
	then $a\sim c$, $b\sim d$.
	
	\medskip
	
	\textit{(c) } 
	Consider any elements $a,c\in H$ and $b,d\in\Sqf H$. If $$a^2b=c^2d,$$ then $a\sim c$ and $b\sim d$.
	
	\medskip
	
	\textit{(d) } 
	Consider any elements $s_0, s_1, \dots, s_n\in\Sqf H$ and $t_0, t_1, \dots, t_n\in\Sqf H$. If $$s_n^{2^n}s_{n-1}^{2^{n-1}}\dots s_1^2s_0=
	t_n^{2^n}t_{n-1}^{2^{n-1}}\dots t_1^2t_0,$$
	then $s_i\sim t_i$ for $i=0, 1, \dots, n$.
\end{pr}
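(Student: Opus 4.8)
The plan is to treat the four parts in the order (c), (d), (a)/(b), since (c) is the engine for (d) and the arguments for (a) and (b) are variants of one common idea. Throughout I would use that a $\GCD$-monoid is pre-Schreier, so that Lemma \ref{l2.2.5} and Proposition \ref{p2.2.6} (giving $\Gpr H=\Sqf H$) apply, together with the Euclid-type property that $x\mid yz$ and $x\rpr y$ imply $x\mid z$ (immediate from the definitions of pre-Schreier and of $\rpr$: a pre-Schreier splitting $x=x_1x_2$ with $x_1\mid y$, $x_2\mid z$ forces $x_1\in H^{\ast}$). I would also use repeatedly that a square-free element has no non-invertible square divisor, so $w^2\mid s$ with $s\in\Sqf H$ forces $w\in H^{\ast}$.

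For \textit{(c)}, I would set $g=\GCD(a,c)$ and write $a=g\alpha$, $c=g\gamma$ with $\alpha\rpr\gamma$. Substituting into $a^2b=c^2d$ and cancelling $g^2$ gives $\alpha^2b=\gamma^2d$. By Lemma \ref{l2.2.5}(c) we get $\alpha^2\rpr\gamma^2$; since $\alpha^2\mid\gamma^2d$, the Euclid property yields $\alpha^2\mid d$, and as $d\in\Sqf H$ this forces $\alpha\in H^{\ast}$, and symmetrically $\gamma\in H^{\ast}$. Hence $a\sim g\sim c$, and cancelling $a^2\sim c^2$ in $a^2b=c^2d$ gives $b\sim d$.

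For \textit{(d)}, I would induct on $n$, the case $n=0$ being trivial. For the step I would write the two sides as $X^2s_0=Y^2t_0$ with $X=\prod_{k=1}^{n}s_k^{2^{k-1}}$ and $Y=\prod_{k=1}^{n}t_k^{2^{k-1}}$. Since $s_0,t_0\in\Sqf H$, part (c) applies and gives $s_0\sim t_0$ together with $X\sim Y$. Reindexing $k\mapsto k-1$ turns $X\sim Y$ into a relation of the same shape at level $n-1$; absorbing the resulting unit into the lowest factor (which remains square-free) produces a genuine equality to which the induction hypothesis applies, yielding $s_i\sim t_i$ for $i=1,\dots,n$ and completing the step.

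For \textit{(a)} and \textit{(b)} I would give a single argument proving $b\mid d$ and $d\mid b$, whence $b\sim d$ and then cancellation in $ab\sim cd$ gives $a\sim c$. To show $b\mid d$: from $b\mid ab\sim cd$ and pre-Schreier, factor $b=b_1b_2$ with $b_1\mid c$ and $b_2\mid d$, where $b_1\in\Sqf H$ by Lemma \ref{l2.1.5}(a). The decisive point is that $b_1^2\mid c$: in case (b) this is the hypothesis imposed on $c$, while in case (a) it follows since $b_1\mid c\mid f^n$ with $b_1\in\Gpr H$ forces $b_1\mid f$, hence $b_1^2\mid f^2\mid c$. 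Then $b_1\mid b$ with $a\rpr b$ gives $b_1\rpr a$, so $b_1^2\rpr a$ by Lemma \ref{l2.2.5}(c); combining $b_1^2\mid c\mid cd\sim ab$ with $b_1^2\rpr a$ and the Euclid property yields $b_1^2\mid b$, and square-freeness of $b$ forces $b_1\in H^{\ast}$, so $b\sim b_2\mid d$. The reverse divisibility $d\mid b$ is symmetric, using the analogous hypothesis on $a$. The main obstacle is conceptual rather than computational: one must see that the ``square-full'' factor and the square-free factor have disjoint radicals, so that a shared square-free divisor is forced to appear squared inside one factor yet, being coprime to that factor's partner, must then divide the square-free element, a contradiction. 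I would also flag that in \textit{(b)} the square-full hypothesis must be read as holding for $c$ as well as for $a$; without it the conclusion fails (e.g.\ taking $c=ab$, $d\in H^{\ast}$), so the symmetry used above is essential.
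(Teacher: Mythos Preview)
Your arguments for (c) and (d) are correct; the paper does not prove these but cites \cite{2}, so you are supplying what the paper omits.

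For (a) and (b) your proof is correct but follows a genuinely different route from the paper. The paper works directly with $\GCD$: in (a) it sets $g=\GCD(d,e)$, shows $g^2\mid d$ and hence $g\in H^{\ast}$, deduces $d\rpr e$ and then $d\rpr a$ (via $a\mid e^m$), symmetrically obtains $b\rpr c$, and finishes with the ``cross'' Lemma~\ref{l2.2.5}(a). In (b) it takes $g=\GCD(a,d)$ and argues analogously. Your approach instead uses the pre-Schreier splitting $b=b_1b_2$ with $b_1\mid c$, $b_2\mid d$, forces $b_1$ to be a unit by showing $b_1^2\mid b$, and concludes $b\mid d$ (and symmetrically $d\mid b$). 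Both methods exploit the same underlying phenomenon---a square-free piece cannot share a non-unit divisor with the square-full part---but yours avoids invoking Lemma~\ref{l2.2.5}(a) at the cost of needing Proposition~\ref{p2.2.6} ($\Sqf H=\Gpr H$) for case~(a), while the paper's $\GCD$ computation is perhaps more direct in a $\GCD$-monoid.

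Your observation about (b) is well taken: the statement as written imposes the square-full condition only on $a$, yet both the paper's proof (in its ``similarly'' step with $h=\GCD(b,c)$) and your proof require it for $c$ as well, and your counterexample $c=ab$, $d\in H^{\ast}$ shows the asymmetric version is false. This is a genuine omission in the statement that you have correctly identified.
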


\begin{proof}
 (a)
	Assume $ab\sim cd$. Put $g=\GCD (d,e)$. Since $d\in\Sqf H$, then by Lemma \ref{l2.1.5} we have $g\in\Sqf H$, because $g\mid d$. Since $g\mid e$, then $g^2\mid e^2$, and hence $g^2\mid a$, because $e^2\mid a$. Since $g^2\mid a$ and $a\mid cd$, so $g^2\mid cd$. 
	Let us remind $g\mid d$, then $g^2\mid d^2$. Since $g^2\mid cd$, $g^2\mid d^2$ and $c\rpr d$, hence by Lemma we refer $g^2\mid\GCD(cd, d^2)$, so $g^2\mid d$.
	Because $d\in\Sqf H$, so $g\in H^{\ast}$.
	Then $d\rpr e$, because $g$ is their greatest common divisor. Therefore by Lemma \ref{l2.2.5} (c) we refer $d\rpr e^m$, and hence $d\rpr a$, because $a\mid e^m$. Similarly, we justify that $b\rpr c$ putting $h=\GCD (b, f)$ and we repeat the reasoning. Then by Lemma \ref{l2.2.5} (a) we have $a\sim c$, $b\sim d$.
	
	\medskip
	
(b)
	Assume $ab\sim cd$. Put $g=\GCD (a,d)$. Since $d\in\Sqf H$, then by Lemma \ref{l2.1.5} we have $g\in\Sqf H$, because $g\mid d$. Since $g\mid a$, then $g^2\mid a$ by the assumption. Hence $g^2\mid cd$. Let us remind $g\mid d$, then $g^2\mid d^2$. Since $g^2\mid cd$, $g^2\mid d^2$ and $c\rpr d$, hence we refer $g^2\mid\GCD(cd, d^2)$, so $g^2\mid d$.
	Because $d\in\Sqf H$, so $g\in H^{\ast}$.
	Then $a\rpr d$, because $g$ is their greatest common divisor. Because $d\mid ab$, hence $d\mid b$. Similarly, we justify that $b\rpr c$ putting $h=\GCD (b, c)$ and we repeat the reasoning. Then by Lemma \ref{l2.2.5} (a) we have $a\sim c$, $b\sim d$.
			
	\medskip
	
(c), (d)	The uniques of \circled{6s}, \circled{2s} was proved in \cite{2}, Proposition 2 (i), (ii).
\end{proof}

\section{Some examples}

\begin{ex}
	\label{exex}
	Let $$H=\mathbb{N}_{\geqslant k}\cup\{0\}.$$
	$H$ be a $\GCD$-monoid. All conditions are met from \ref{R3.1}.
\end{ex}

\bigskip

\begin{ex}
	For the established $k\in\mathbb{N}$, let $H=\mathbb{Q}_{\geqslant k}\cup\{0\}$. 
	A monoid $H$ does not meet all the conditions in \ref{R3.1}.
\end{ex} 

\bigskip

\begin{ex}
	Let $H=\mathbb{N}_0^2$ with addition action. 
	$H$ be an ACCP-monoid. 
	A monoid $H$ satisfies the conditions: 0s, 1s, 2s, 3s, 4s, 4'r, 5s, 5'r, 6s.
	$H$ does not meet the conditions: 0r, 1r, 2r, 3r, 4r, 4's, 5r, 5's, 6r.
\end{ex}

\bigskip

\begin{ex}
	Let $H$ be a monoid, not a group such that every element of $H$ be a square.
	In particular $\mathbb{Q}_{\geqslant 0}$ and $\big\langle \dfrac{1}{2^n}\mid n\in\mathbb{N}\big\rangle$.
	The monoid $H$ satisfies the conditions: 4's, 4'r, 5's, 5'r, 6s, 6r. The others are not met.
\end{ex}

\bigskip

\begin{ex} 
	Consider a submonoid of free monoid $$H=\langle x_1, x_2, \dots, y_1, y_2, \dots \mid y_i=x_{i+1}^py_{i+1}^q, i=1, 2, \dots \rangle$$ for any $p$, $q\in\mathbb{N}$.
	$H$ be a $\GCD$-monoid, not ACCP-monoid. 
	Consider the special cases of the monoid $H$.
	
	\begin{itemize} 
			\item[(1) ]
			Let $$H=\langle x_1, x_2, \dots, y_1, y_2, \dots\mid y_i=x_{i+1}y_{i+1}, i=1, 2, \dots \rangle.$$
			All conditions are met from \ref{R3.1}.
	\end{itemize} 
	
	\begin{itemize} 
		\item[(2) ]
		Let $q=2r$.
		Then
		$$H=\langle x_1, x_2, \dots, y_1, y_2, \dots\mid y_i=x_{i+1}^py_{i+1}^{2r}, i=1, 2, \dots \rangle.$$
		The monoid $H$ satisfies the conditions: 4s, 4's, 5s, 5's. The others are not met.
	\end{itemize} 
		
	\begin{itemize} 
			\item[(3) ]
			Let $q=2t+1$.
			Then
			$$H=\langle x_1, x_2, \dots, y_1, y_2, \dots \mid y_i=x_{i+1}^py_{i+1}^{2t+1}, i=1, 2, \dots, p\neq 1, t\neq 0\rangle.$$
			The monoid $H$ satisfies the conditions: 4s, 4's, 5s, 5's. The others are not met.
	\end{itemize} 
\end{ex}

\bigskip

\begin{ex}
	Let
	$$H=\langle x_1, x_2, \dots, y_1, y_2, \dots, z_1, z_2, \dots \mid x_{i+1}=x_i^2y_i, y_{i+1}=y_iz_i, i=1, 2,\dots \rangle.$$
	A monoid $H$ does not satisfies 1s, 1r, 2s, 2r.
	Other conditions have not been investigated.
\end{ex}

\section{Classifications of monoids with respect to square-free and radical	factorizations}
\label{R4.1}

In section \ref{R3.1} determined $18$ properties of monoids:

\begin{center}
	0s, 0r, 1s, 1r, 2s, 2r, 3s, 3r, 4s, 4r, 4's, 4'r, 5s, 5r, 5's, 5'r, 6s, 6r.
\end{center}

We can treat these properties as propositional forms defined on the class of monoids.

\medskip

To simplify the reasoning, it is worth considering the following pairs:

\begin{center}
	0sr=(0s,0r), 1sr=(1s,1r), 2sr=(2s,2r), 3sr=(3s,3r), 4sr=(4s,4r), 5sr=(5s,5r), 6sr=(6s,6r). 
\end{center}

For A$=0, 1, 2, 3, 4, 5, 6$ as the logical value of the pair Asr we take the sum of the logical values of the forms As and Ar:
\begin{center}
	\vvv(Asr) $=$ \vvv(As) $+$ \vvv(Ar),
\end{center}

\noindent
where \vvv~~be the logical value of the monoid property.

\medskip

Note that there is an implication of Ar $\Rightarrow$ As, so \vvv(Asr) uniquely defines a pair of values (\vvv(As), \vvv(Ar)).

\medskip

In the tables below, we present the logical values of the entered sentence form pairs.

$$\begin{tabular}{cccc}
\begin{tabular}{|c|c||c|}
\hline
\vvv(0s) & \vvv(0r) & \vvv(0sr) \\
\hline
\hline
1 & 1 & 2\\
\hline
1 & 0 & 1\\
\hline
0 & 0 & 0\\
\hline
\end{tabular} \,

\begin{tabular}{|c|c||c|}
	\hline
	\vvv(1s) & \vvv(1r) & \vvv(1sr) \\
	\hline
	\hline
	1 & 1 & 2\\
	\hline
	1 & 0 & 1\\
	\hline
	0 & 0 & 0\\
	\hline
\end{tabular}

$\dots$

\begin{tabular}{|c|c||c|}
	\hline
	\vvv(6s) & \vvv(6r) & \vvv(6sr) \\
	\hline
	\hline
	1 & 1 & 2\\
	\hline
	1 & 0 & 1\\
	\hline
	0 & 0 & 0\\
	\hline
\end{tabular}
\end{tabular}$$

It seems that the implications of 4'r $\Rightarrow$ 4's, 4's $\Rightarrow$ 4'r, 5'r $\Rightarrow$ 5's, 5's $\Rightarrow$ 5'r generally do not apply, so the value pair is not uniquely defined by the sum of the values. It is very important in this situation to find counterexamples.

\medskip

We will often use the following simple observation:

\medskip

\begin{center}
p $\Rightarrow$ q is exactly where \vvv(p) $\leqslant$ \vvv(q).
\end{center} 

\begin{lm}
\label{444}
For A, B $\in\{0, 1, \dots, 6\}$ such that $A\neq B$, system of implications
$$\begin{array}{ccccccccccccccc}
\circled{Ar} & \Rightarrow & \circled{Br}\\
\Downarrow && \Downarrow\\
\circled{As} & \Rightarrow & \circled{Bs}
\end{array}$$
is exactly where 
\normalfont \vvv(Asr) $\leqslant$ \vvv(Bsr).
\end{lm}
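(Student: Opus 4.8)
The plan is to reduce the stated square of implications to a single numerical inequality by exploiting two facts already available. First, the vertical arrows of the diagram, $\circled{Ar}\Rightarrow\circled{As}$ and $\circled{Br}\Rightarrow\circled{Bs}$, are valid for every monoid by Proposition \ref{p3.4.1} (c). Second, by the observation recorded just above, an implication $p\Rightarrow q$ between two of our propositional forms holds over the whole class of monoids exactly when \vvv(p) $\leqslant$ \vvv(q) holds for every monoid. Since the two vertical arrows are automatic, the displayed system holds if and only if both horizontal implications $\circled{As}\Rightarrow\circled{Bs}$ and $\circled{Ar}\Rightarrow\circled{Br}$ hold, which by the observation means: for every monoid, \vvv(As) $\leqslant$ \vvv(Bs) and \vvv(Ar) $\leqslant$ \vvv(Br).

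Thus the whole statement comes down to the pointwise equivalence, for an arbitrary fixed monoid, between the conjunction ``\vvv(As) $\leqslant$ \vvv(Bs) and \vvv(Ar) $\leqslant$ \vvv(Br)'' and the single inequality \vvv(Asr) $\leqslant$ \vvv(Bsr); recall that \vvv(Asr) $=$ \vvv(As) $+$ \vvv(Ar), and similarly for $B$. The forward direction is immediate, since adding the two component inequalities yields \vvv(Asr) $\leqslant$ \vvv(Bsr).

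The content lies in the backward direction, and this is the only step I expect to require care. Here I would use that $\circled{Ar}\Rightarrow\circled{As}$ forces \vvv(Ar) $\leqslant$ \vvv(As), so that the admissible value pairs (\vvv(As), \vvv(Ar)) are only $(0,0)$, $(1,0)$, $(1,1)$ --- the pattern $(0,1)$ being excluded --- and these are exactly the pairs of sum $0$, $1$, $2$. Consequently \vvv(Asr) determines (\vvv(As), \vvv(Ar)) uniquely, as already noted after the value tables, and likewise for $B$. I would then run a short case analysis on the value \vvv(Asr): if it is $0$ the two component inequalities hold trivially because their left sides vanish; if it is $1$, then \vvv(As) $=1$ and \vvv(Ar) $=0$, while \vvv(Bsr) $\geqslant 1$ forces \vvv(Bs) $=1$, so both components hold; if it is $2$, then \vvv(Bsr) $=2$ forces \vvv(Bs) $=$ \vvv(Br) $=1$, again giving both. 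Equivalently, the three admissible pairs, ordered componentwise, form a chain $(0,0)<(1,0)<(1,1)$ that is isomorphic under the sum to $\{0,1,2\}$, so the summed comparison and the componentwise comparison coincide. The decisive point --- and the reason the lemma holds at all --- is precisely this exclusion of the $(0,1)$ pattern by Proposition \ref{p3.4.1} (c): it is what prevents two distinct pairs of equal sum from being componentwise incomparable, which is exactly the gap that the sum alone could otherwise fail to detect.
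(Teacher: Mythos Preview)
Your proof is correct. The paper itself does not supply a proof of this lemma at all: it states the lemma immediately after the observation ``$p\Rightarrow q$ exactly when \vvv(p)\,$\leqslant$\,\vvv(q)'' and then moves on, treating the result as self-evident. Your argument fills in precisely the details the paper omits --- reducing the square to the two horizontal implications via Proposition \ref{p3.4.1}\,(c), translating those to the componentwise inequalities, and then checking that on the admissible triples $(0,0),(1,0),(1,1)$ the componentwise order agrees with the order by sums. There is nothing to correct; your write-up is in fact more complete than the paper's.
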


\medskip

We will determine all possible systems of logical values in the class of all monoids considering the dependencies collected in the Proposition \ref{p3.4.1}.

\medskip

We will start by determining all possible values of logical properties of square-free and radical factorization:

\begin{center}
	0s, 0r, 1s, 1r, 2s, 2r, 3s, 3r.
\end{center}

The above dependencies are presented in the diagram below.

$$\begin{array}{ccccccc}
\circled{2r} & & \Rightarrow & & \circled{1r}\\
& \mbox{\begin{psfrags}\rotatebox{135}{$\Leftarrow$}\end{psfrags}} &&& \Downarrow & \mbox{\begin{psfrags}\rotatebox{135}{$\Leftarrow$}\end{psfrags}}\\
\Downarrow&&\circled{3r} &  & \Rightarrow &  &\circled{0r}\\
&& \Downarrow && \Downarrow &&  \\
\circled{2s} & & \Rightarrow & & \circled{1s} && \Downarrow\\
& \mbox{\begin{psfrags}\rotatebox{135}{$\Leftarrow$}\end{psfrags}} & \Downarrow && & \mbox{\begin{psfrags}\rotatebox{135}{$\Leftarrow$}\end{psfrags}}& \\
&&\circled{3s} & & \Rightarrow &  &\circled{0s}\\
\end{array}$$

Note that by Lemma \ref{444} the above implications apply exactly when
\begin{center}
	\vvv(2sr) $\leqslant$ \vvv(1sr) $\leqslant$ \vvv(0sr)
	
	\smallskip 
	
	\vvv(2sr) $\leqslant$ \vvv(3sr) $\leqslant$ \vvv(0sr)
\end{center}

We print out all possible values \vvv(1sr) and \vvv(3sr) depending on \vvv(0sr) and \vvv(2sr).

\medskip

\begin{center}
	\begin{tabular}{|c|c|c|c|c|}
		\hline
		\vvv(0sr) & \vvv(2sr) & \vvv(1sr) & \vvv(3sr) \\
		\hline
		\hline
		2 & 2 & 2 & 2 \\
		\hline
		2 & 1 & 2,1 & 2,1 \\
		\hline
		2 & 0 & 2,1,0 & 2,1,0 \\
		\hline
		1 & 1 & 1 & 1 \\
		\hline
		1 & 0 & 1,0 & 1,0 \\
		\hline
		0 & 0 & 0 & 0 \\
		\hline
	\end{tabular}
	\medskip
	
	Table 4.1: Possible values 0sr, 1sr, 2sr, 3sr
\end{center}

All possible relationships between properties
\begin{center}
	2s, 2r, 4s, 4r, 4'r, 5s, 5r, 5'r
\end{center}
shows the following diagram.

$$\begin{array}{ccccccccccccccc}
&&\circled{5'r} && \circled{4'r}\\
&&\Uparrow && \Uparrow\\
\circled{2r}&\Rightarrow &\circled{5r}&\Rightarrow&\circled{4r} \\
\Downarrow && \Downarrow && \Downarrow \\
\circled{2s}&\Rightarrow &\circled{5s}&&\circled{4s}
\end{array}$$

Let's start with the relationship between 4sr and 4'r. Note that if \vvv(4r) = 0 then \vvv(4'r) can be any value, and if \vvv(4r) = 1 then \vvv(4'r) = 1. We present this relationship in the table:

\begin{center}
	\begin{tabular}{|c|c|c|}
		\hline
		\vvv(4sr) & \vvv(4'r) \\
		\hline
		\hline
		2 & 1 \\
		\hline
		1 & 1,0 \\
		\hline 
		0 & 1,0 \\
		\hline
	\end{tabular}
	\medskip
	
	Table 4.2: Possible values 4'r for the given value 4sr.
\end{center}

Let's move on to the relationship between 5sr, 5'r and 4sr. Note that if \vvv(5r) = 0 then \vvv(5'r) and \vvv(4r) can be of any value, and if \vvv(5r) = 1 then \vvv(5'r) = 1 and \vvv(4r) = 1. We present these relationships in the table:

\begin{center}
	\begin{tabular}{|c|c|c|c|c|}
		\hline
		\vvv(5sr) & \vvv(5'r) & \vvv(4sr)\\
		\hline
		\hline
		2 & 1 & 2\\
		\hline
		1 & 1,0 & 2,1,0\\
		\hline
		0 & 1,0 & 2,1,0\\
		\hline
	\end{tabular}
	
	\medskip
	
	Table 4.3: Possible values 5'r, 4sr for the given value 5sr.
\end{center}

Note that the implications between 2s, 2r, 5s, 5r are exactly when \vvv(2sr) $\leqslant$ \vvv(5sr) (Lemma \ref{444}), so we present these relationships in the table below:

\begin{center}
	\begin{tabular}{|c|c|c|}
		\hline
		\vvv(2sr) & \vvv(5sr) \\
		\hline
		\hline
		2 & 2 \\
		\hline
		1 & 2,1 \\
		\hline 
		0 & 2,1,0 \\
		\hline
	\end{tabular}
	
	\medskip
	
	Table 4.4: Possible values 5sr for the given value 2sr.
\end{center}

Tables 4.2, 4.3, 4.4 can be combined into one table. Let L1 denote the number of possible systems of values 4sr, 4'r, 5sr, 5'r for the given value of 2sr.

\begin{center}
	\begin{tabular}{|c|c|c|c|c||c|}
		\hline
		\vvv(2sr) & \vvv(5sr) & \vvv(5'r) & \vvv(4sr) & \vvv(4'r) & L1 \\
		\hline
		\hline
		2 & 2 & 1 & 2 & 1 & 1 \\
		\hline
		\multirow{3}{*}{1} & 2 & 1 & 2 & 1 & \multirow{3}{*}{1+2+8=11}\\
		\cline{2-5} & \multirow{2}{*}{1} & \multirow{2}{*}{1,0} & 2 & 1 &\\
		\cline{4-5} &&& 1,0 & 1,0 &\\
		\hline
		\multirow{3}{*}{0} & 2 & 1 & 2 & 1 & \multirow{3}{*}{1+4+16=21}\\
		\cline{2-5} & \multirow{2}{*}{1,0} & \multirow{2}{*}{1,0} & 2 & 1 &\\
		\cline{4-5} &&& 1,0 & 1,0 &\\
		\hline
	\end{tabular}
	
	\medskip
	
	Table 4.5: Possible values 5sr, 5'r, 4sr, 4'r for the given 2sr.\\
	
\end{center}

All possible relationships between properties
\begin{center}
	3s, 3r, 6s, 6r
\end{center}
shows the following diagram.

$$\begin{array}{ccccccccccccccc}
\circled{3r} & \Rightarrow & \circled{6r}\\
\Downarrow && \Downarrow\\
\circled{3s} & \Rightarrow & \circled{6s}
\end{array}$$

From the Lemma \ref{444} we know that the above implications apply exactly when \vvv(3sr) $\leqslant$ \vvv(6sr), so we present these dependencies in the table below.
Let L2 denote the number of possible systems of values 6sr for a given value of 3sr.

\begin{center} 
	\begin{tabular}{|c|c||c|}
		\hline
		\vvv(3sr) & \vvv(6sr) & L2 \\
		\hline
		\hline
		2 & 2 & 1\\
		\hline
		1 & 2,1 & 2\\
		\hline
		0 & 2,1,0 & 3\\
		\hline
	\end{tabular}
	
	\medskip
	
	Table 4.6: Possible values 6sr for the given value 3sr.
\end{center}

In Proposition \ref{p3.4.1} there are no 4's and 5's relationships. 4's and 5's can take any value.

\begin{center} 
	\begin{tabular}{|c|c|}
		\hline
		\vvv(4's) & \vvv(5's) \\
		\hline
		\hline
		1 & 1 \\
		\hline
		1 & 0 \\
		\hline
		0 & 1 \\
		\hline
		0 & 0 \\
		\hline
	\end{tabular}
	
	\medskip
	
	Table 4.7: Possible values 4's and 5's.
\end{center}

Based on the tables 4.1, 4.5, 4.6, 4.7 we can now determine the number of possible systems of values 1sr, 3sr, 4sr, 4's, 4'r, 5sr, 5's, 5'r, 6sr for a given system of values 0sr, 2sr. Let us denote this number by L3.

\medskip

\begin{center}
	\begin{tabular}{|c|c|c|c||c|}
		\hline
		\vvv(0sr) & \vvv(2sr) [L1] & \vvv(1sr) & \vvv(3sr) [L2] & L3 \\
		\hline
		\hline
		2 & 2 [1] & 2 & 2 [1] & 4 \\
		\hline
		2 & 1 [11] & 2,1 & 2 [1], 1 [2] & 264 \\
		\hline
		2 & 0 [21] & 2,1,0 & 2 [1], 1 [2], 0 [3] & 1512 \\
		\hline
		1 & 1 [11] & 1 & 1 [2] & 88 \\
		\hline
		1 & 0 [21] & 1,0 & 1 [2], 0 [3] & 840 \\
		\hline
		0 & 0 [21] & 0 & 0 [3] & 252 \\
		\hline
	\end{tabular}
	
	\medskip
	
	Table 4.8: Numbers of possible systems values 1sr, 3sr, 4sr, 4's, 4'r, 5sr, 5's, 5'r, 6sr for a given system of values 0sr, 2sr.
\end{center}

All the values in the L3 column were multiplied by $4$ because we included the 4's and 5's properties, which are independent of the other values (Table 4.7).
Summing up all the values of L3, we get $2960$ of possible sets of values.

\medskip

Recall that in an atomic monoid the condition $0s$ (Proposition \ref{p3.4.6}) is satisfied. Then \vvv (0sr) can be either $2$ or $1$. Therefore, from table 4.8, discard those monoids for which \vvv (0sr) $ = $ 0 (last row). These dependencies are presented in the table:

\begin{center}
	\begin{tabular}{|c|c|c|c||c|}
		\hline
		\vvv(0sr) & \vvv(2sr) & \vvv(1sr) & \vvv(3sr) & L3 \\
		\hline
		\hline
		2 & 2 & 2 & 2 & 4 \\
		\hline
		2 & 1 & 2,1 & 2,1 & 264 \\
		\hline
		2 & 0 & 2,1,0 & 2,1,0 & 1512 \\
		\hline
		1 & 1 & 1 & 1 & 88 \\
		\hline
		1 & 0 & 1,0 & 1,0 & 840 \\
		\hline
	\end{tabular}
	
	\medskip
	
	Table 4.9: Numbers of possible systems of values 1sr, 3sr, 4sr, 4's, 4'r, 5sr, 5's, 5'r, 6sr for a given system of values 0sr, 2sr in atomic monoids.
\end{center}

Thus, all possible systems of values in an atomic monoid, there is $2708$.

\medskip

Similarly, we consider all possible systems of values for ACCP-monoids, SR-monoids, pre-Schreier monoids, $\GCD$-monoids, $\GCD s$-monoids. The final tables are presented below.

\begin{center}
	\begin{tabular}{|c|c|c|c||c|}
		\hline
		\vvv(0sr) & \vvv(2sr) [L1] & \vvv(1sr) & \vvv(3sr) [L2] & L3 \\
		\hline
		\hline
		2 & 2 [2] & 2 & 2 [1] & 4 \\
		\hline
		2 & 1 [20] & 2,1 & 1 [1] & 80 \\
		\hline
		2 & 0 [20] & 2,1,0 & 1 [1] & 120 \\
		\hline
		1 & 1 [20] & 1 & 1 [1] & 40 \\
		\hline
		1 & 0 [20] & 1,0 & 1 [1] & 80 \\
		\hline
	\end{tabular}
	
	\medskip
	
	Tabela 4.10: Numbers of possible systems of values 1sr, 3sr, 4sr, 4's, 4'r, 5sr, 5's, 5'r, 6sr for a given values 0sr, 2sr in ACCP-monoids.
\end{center}

All the values in L3 were multiplied by $2$ because we included $4's$ which is independent of the other values.
Summing up all the values of L3, we get $324$ of possible ACCP-monoid systems.

\medskip 

In SR-monoids, pre-Schreier monoids, $\GCD$-monoids, $GCDs$-monoids the concept of a square-free element is equivalent to a radical generator. Therefore, instead of Asr, we will consider As for A = 1, 2, 3, 4, 4 ', 5, 5', 6.
We will determine all possible logical values of dependencies collected in the Proposition \ref{p3.4.1a}.

\medskip

\begin{center}
	\begin{tabular}{|c|c|c|c||c|}
		\hline
		\vvv(0s) & \vvv(2s) [L1] & \vvv(1s) & \vvv(3s) [L2] & L3 \\
		\hline
		\hline
		1 & 1 [1] & 1 & 1 [1] & 1 \\
		\hline
		1 & 0 [7] & 1,0 & 1 [1], 0 [2] & 42 \\
		\hline
		0 & 0 [7] & 0 & 0 [2] & 14 \\
		\hline
	\end{tabular}
	
	\medskip
	
	Table 4.11: Numbers of possible systems of values 1s, 3s, 4s, 4's, 5s, 5's, 6s for a given system of values 0s, 2s in SR-monoids.
\end{center}

\medskip

Summing up all the values of L3, we get $57$ of possible systems of values in a SR-monoids.

\medskip

\begin{center}
	\begin{tabular}{|c|c|c|c||c|}
		\hline
		\vvv(0s) & \vvv(2s) [L1] & \vvv(1s) & \vvv(3s) [L2] & L3 \\
		\hline
		\hline
		1 & 1 [1] & 1 & 1 [1] & 1 \\
		\hline
		1 & 0 [5] & 0 & 1 [1], 0 [2] & 15 \\
		\hline
		0 & 0 [5] & 0 & 0 [2] & 10 \\
		\hline
	\end{tabular}
	
	\medskip
	
	Table 4.12: Number of possible systems of values 0s, 3s, 4s, 4's, 5s, 5's, 6s for a given system of values 1s, 2s in pre-Schreier monoids.
\end{center}

\medskip

Summing up all the values of L3, we get $26$ of possible systems of values in a pre-Schreier monoids.

\medskip

\begin{center}
	\begin{tabular}{|c|c|c|c||c|}
		\hline
		\vvv(0s) & \vvv(2s) [L1] & \vvv(1s) & \vvv(3s) [L2] & L3 \\
		\hline
		\hline
		1 & 1 [1] & 1 & 1 [1] & 1 \\
		\hline
		0 & 0 [5] & 0 & 0 [2] & 10 \\
		\hline
	\end{tabular}
	
	\medskip
	
	Table 4.13: Number of possible systems of values 4s, 4's, 5s, 5's, 6s for a given system of values 0s, 1s, 2s, 3s in a $\GCD$-monoids.
\end{center}

\medskip

Summing up all the values of L3, we get $11$ of possible systems of values in a $\GCD$-monoids.

\medskip

We know that any GCDs-monoid satisfies the $5's$ condition (Proposition \ref{p3.4.5}). Therefore, the same relationships apply as for the~$\GCD$-monoids, but \vvv(5's) $ = $ $1$ should be included.

\medskip

\begin{center}
	\begin{tabular}{|c|c|c|c||c|}
		\hline
		\vvv(0s) & \vvv(2s) [L1] & \vvv(1s) & \vvv(3s) [L2] & L3 \\
		\hline
		\hline
		1 & 1 [1] & 1 & 1 [1] & 1 \\
		\hline
		0 & 0 [3] & 0 & 0 [2] & 6 \\
		\hline
	\end{tabular}
	
	\medskip
	
	Table 4.14: Numbers of possible systems of values 4s, 4's, 5s, 5's, 6s for a given system of values 0s, 1s, 2s, 3s.
\end{center}

\medskip

Summing up all the values of L3, we get $7$ of possible systems of values in a GCDs-monoids.

\end{document}